\documentclass[11pt]{amsart}
\usepackage{amssymb, amsmath, amsthm}
\usepackage{amsrefs}
\usepackage{graphicx}
\usepackage[final]{hyperref}
\usepackage[a4paper, centering]{geometry}

\usepackage{color}
\usepackage{graphicx}

\newtheorem{theorem}{Theorem}
\newtheorem{proposition}[theorem]{Proposition}
\newtheorem{lemma}[theorem]{Lemma}

\theoremstyle{definition}

\theoremstyle{remark}
\newtheorem{remark}[theorem]{Remark}

\def\R{\mathbb{R}}

\def\N{\mathbb{N}}

\def\pscal#1#2{\left\langle#1,\,#2\right\rangle}

\def\Om{\Omega} 
\def\k{\kappa} 

\def\subjet{J^{2,-}}

\DeclareMathOperator{\dist}{dist}

\DeclareMathOperator{\Tr}{Tr}

\begin{document}

\title[]%
{Power-logconcavity of the Laplacian ground state} 

\author[G.~Crasta, I.~Fragal\`a]{Graziano Crasta,  Ilaria Fragal\`a}

\address[Graziano Crasta]{Dipartimento di Matematica ``G.\ Castelnuovo'',
Sapienza University of Rome\\
P.le A.\ Moro 5 -- 00185 Roma (Italy)}
\email{graziano.crasta@uniroma1.it}

\address[Ilaria Fragal\`a]{
Dipartimento di Matematica, Politecnico\\
Piazza Leonardo da Vinci, 32 --20133 Milano (Italy)
}
\email{ilaria.fragala@polimi.it}

\keywords{Laplacian eigenfunction, logconcavity, convex envelope}

\subjclass[2010]{35E10, 35J25, 35K05, 26B25}

\date{\today}

\begin{abstract}  Let $u$ be the first Dirichlet Laplacian eigenfunction of a bounded convex set $\Om$ in $\R ^n$.  We strengthen the classical result by Brascamp-Lieb which asserts that $u$ is logconcave in $\Om$: we prove that, if $u$ is normalized so that its $L^\infty$-norm  does not exceed a threshold 
$\overline \k (\Om)<1$ depending explicitly on the diameter of the domain and on its principal frequency, the function $- ( - \log u ) ^{1/2}$ is concave in $\Om$.  

 \end{abstract} 
\maketitle

\section{Introduction}

A seminal result by Brascamp-Lieb states that logconcavity is preserved by the heat flow $v _ t = \Delta v$ with Dirichlet boundary conditions
 on an open bounded convex domain $\Om \subset \R ^n$ \cite{BL}: as a consequence of the logconcavity of the heat kernel, 
 if the initial datum $v _0(x) = v (x, 0)$ is  a bounded, positive, logconcave function in $\Om$, then  $v (\cdot, t)$  remains logconcave for every $t>0$
(in fact,  Lee-V\'azquez   later proved 
   that logconcavity arises spontaneously  along the  Dirichlet heat flow even when the initial datum is not logconcave, see \cite{LV}).  
   
The logconcavity of the ground state, namely the first Dirichlet eigenfunction $u$,  follows by a limiting argument, since
 log-concavity is stable under scalar multiplication and, for some $c>0$, 
 $$\lim _{ t \to + \infty} e ^ { \lambda _ 1 t} v  ( t, x) = c u ( x)  \quad \text{  uniformly in  }  \overline \Om\,.$$
This result has several extensions (see e.g.\ \cite{Sak,  BS09, CF20, CFrobin, Col25}) and plenty of  important implications, for instance in the study of the fundamental gap, concentration phenomena,  Poincar\'e or Brunn-Minkowski type inequalities  
\cite{ABF1, AC11,  BorBM, Col2005, ledoux}.

In a series of recent papers \cite{IST20,IST22,IST24}, Ishige-Salani-Takatsu raised and investigated  the question whether the above properties remain valid for some stronger kind of concavity beyond logconcavity. 
In particular, they introduced the following notion of power-logconcavity  (see \cite[Definition 2.1]{IST20}):
given $\alpha \in (0, 1]$, a nonnegative bounded function $u$ is said to be {\it $\alpha$-logconcave}  in $\Om$  if, letting 
\begin{equation}\label{f:lalfa} 
L _\alpha (s):=   - ( - \log s ) ^{\alpha} \,,
\end{equation} 
for every  sufficiently small $\k>0$ one has
\begin{equation}\label{f:alfaconc} L _\alpha (\k u ( ( 1-t )  x + t   y) \geq ( 1-t ) L _\alpha (\k u (x)) + t L _\alpha (\k u (y))
 \qquad \forall x, y \in \Om, \ \forall t \in [0, 1]\, .
\end{equation}   
If $u$ takes values into $[0, 1]$, the above definition amounts to requiring that 
\eqref{f:alfaconc} be satisfied  for some $\k \in (0, 1]$, as it then automatically remains valid for all smaller $\k$, see \cite[Lemma 2.1]{IST20}). 
Moreover, it is straightforward to see that, if $u$ is $\alpha$-logconcave, it is also $\beta$-logconcave  for every $\beta>\alpha$. 
Thus, for every $\alpha \in (0, 1)$, $\alpha$-logconcavity is stronger than logconcavity (which corresponds to $\alpha = 1$). 

The  choice of the function $L _\alpha$ in \eqref{f:lalfa} was motivated by the concavity properties of the heat kernel. In  this respect, 
the value $\alpha = 1/2$ appears to be a critical threshold, because the heat kernel turns out to be $\alpha$-logconcave if and only if 
$\alpha \geq 1/2$. 

It is then natural to ask whether $(1/2)$-log concavity is preserved -- or even arises spontaneously --  along the Dirichlet heat flow, and whether the 
Laplacian ground state is $(1/2)$-logconcave. 
An affirmative answer to both the  the preservation and the spontaneous appearance of $(1/2)$-log concavity along the Dirichlet heat flow was given in \cite[Corollary 3.1 and Theorem 3.3]{IST20}. The authors also showed that $(1/2)$-logconcavity is, in a suitable sense,  the strongest concavity property preserved along this flow \cite[Theorem 3.2]{IST20}. 

Yet the question about the ground state of the Laplacian remained  open, as it was recently pointed  out also  in \cite{HN25}.
Aim of this paper is to answer affirmatively such question.

We emphasize that the limiting argument used by Brascamp-Lieb to deduce the logconcavity of the ground state from the preservation of  logconcavity  breaks down when one attempts to reproduce it for  $(1/2)$-logconcavity.  Indeed,   the largest parameter $\k _t$  for which
\eqref{f:alfaconc}  holds, with $\alpha = 1/2$, for the solution to the heat equation at time $t$ might, in principle, degenerate to $0$ as $t \to + \infty$, 
(see \cite[Remark 4.2]{IST20} and \cite[Section 5]{IST22}). 
Our main result establishes that such degeneracy does not occur, since we obtain that 
the ground state is $(1/2)$-logconcave:

\begin{theorem}\label{t:12log}  Let $\Om$ be an open bounded convex subset of $\R ^n$, and let $u$ be its first Dirichlet Laplacian eigenfunction, normalized so that 
$\max _{\overline \Om } u = 1$. 
Then $u$ is $(1/2)$-logconcave. More precisely,  if $\lambda _ 1 (\Om)$ and $D _\Om$ denote respectively the first Dirichlet Laplacian eigenvalue of $\Om$ and its 
diameter, 
letting 
\begin{equation}\label{f:kappa} 
 \overline \k (\Om):= \exp \Big [- \frac 3 2 \Big ( \frac{ \lambda _ 1 (\Om) D _\Om^2  }{\pi ^ 2}  -1 \Big )  \Big ]  \,,
 \end{equation} 
for every $\k \in (0,  \overline \k (\Om)]$ it holds that
$$ L _{1/2}  (\k u ( ( 1- t )  x + t  y) \geq ( 1- t) L _{1/2} (\k u (x)) + t L _{1/2} (\k u  (y)) \qquad \forall x, y \in \Om, \ \forall t  \in [0, 1]\,.$$  

\end{theorem}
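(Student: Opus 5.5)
Write $v := L_{1/2}(\k u) = -(-\log(\k u))^{1/2}$ and $w := (-\log(\k u))^{1/2} = -v$. The goal is to prove that $w$ is convex in $\Om$ for every $\k \le \overline\k(\Om)$; the displayed inequality in Theorem \ref{t:12log} is exactly the concavity of $v=-w$. Since $\k u\le \k<1$, $w$ is positive, smooth in $\Om$, and blows up at $\partial\Om$.

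\textbf{Step 1: the PDE for $w$.} Set $\phi=-\log(\k u)$, so $w=\phi^{1/2}$. From $\Delta u=-\lambda_1 u$ we get $\Delta\phi=|\nabla\phi|^2+\lambda_1$. Substituting $\phi=w^2$, so that $\nabla\phi=2w\nabla w$ and $\Delta\phi=2w\Delta w+2|\nabla w|^2$, gives
\begin{equation*}
w\,\Delta w = (2w^2-1)|\nabla w|^2 + \tfrac{\lambda_1}{2}.
\end{equation*}

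\textbf{Step 2: convex envelope argument.} Let $w_{**}$ be the convex envelope of $w$ in $\Om$, that is, the largest convex function below $w$; it is well defined because $w\to+\infty$ at $\partial\Om$. I would argue by contradiction, assuming $w_{**}<w$ somewhere. The standard approach (Alvarez--Lasry--Lions type, as in the authors' earlier work) proves that $w_{**}$ is a viscosity supersolution of the same equation. At a point where $w_{**}<w$, Carath\'eodory gives
\begin{equation*}
x_0=\sum_i \mu_i x_i,\qquad w_{**}(x_0)=\sum_i \mu_i w(x_i),
\end{equation*}
with $\nabla w(x_i)=p$ all equal, $w_{**}$ affine on the simplex, and $\nabla^2 w(x_i)$ positive semidefinite in a suitable sense. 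One must check that the operator $F(w,p,X)=w\,\Tr X-(2w^2-1)|p|^2-\lambda_1/2$, combined across the points $x_i$, gives a strict inequality. The key issue is concavity/monotonicity in the zeroth-order variable: dividing by $w$, the equation reads
\begin{equation*}
\Delta w=(2w-1/w)|p|^2+\lambda_1/(2w),
\end{equation*}
and one needs the right-hand side to behave well (in a convex/concave sense in $w$) under averaging. The function $w\mapsto 2w|p|^2-|p|^2/w+\lambda_1/(2w)$ is favourable only when $w$ is large enough, roughly $w^2\gtrsim$ some quantity. This is exactly where the threshold on $\k$ enters: $w^2\ge -\log\k\ge \tfrac32\big(\lambda_1D_\Om^2/\pi^2-1\big)$. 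Combining with the Laplacian averaging inequality $\Tr(\nabla^2 w_{**}(x_0))\ge$ (harmonic-mean type combination of the $\Tr \nabla^2 w(x_i)$) then yields the contradiction. The affine part on the simplex adds no Laplacian but links the values $w(x_i)$.

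\textbf{Step 3: where $D_\Om$ and $\lambda_1$ enter.} The threshold needs a gradient bound: on the contact set, $|p|$ is controlled because $w_{**}$ is affine between the points and $w\ge \sqrt{-\log\k}$, while the oscillation of values over a segment of length at most $D_\Om$ is bounded. More precisely, I expect to use a known sharp bound for logconcave ground states, $|\nabla\log u|^2\le \lambda_1(\ldots)$, or the estimate $-\log u\ge$ via comparison with a 1D slab eigenfunction $\cos(\pi t/D)$. That comparison is what produces the constant $\lambda_1D^2/\pi^2-1$, which vanishes for a slab of width $D$. Also, the Brascamp--Lieb logconcavity gives $\nabla^2\phi\ge0$, hence $\nabla^2 w\ge -|\nabla w|^2/w\,\mathrm{Id}$, a useful lower bound.

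\textbf{Main obstacle.} The hardest part is the algebraic Step 2: showing the structural inequality holds under averaging exactly when $w^2\ge \tfrac32(\lambda_1D^2/\pi^2-1)$. I would first treat the two-point case $n$ arbitrary, $x_0=\mu x_1+(1-\mu)x_2$, optimizing over $|p|\le$ the bound from Step 3. The factor $3/2$ should come out of this optimization.
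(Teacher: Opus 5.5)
Your setup matches the paper: the equation for $w$, the Alvarez--Lasry--Lions analysis of $w^{**}$ at points where $w^{**}<w$, and the idea that $\k$ enters through a convexity condition in the zeroth-order variable. However, the two load-bearing steps are missing, and your Step 3 points in the wrong direction.

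\textbf{The threshold needs a lower bound on $|p|$, not an upper one.} Write $\Delta w=g(w)$ with $g(s)=2|p|^2(s^2+a)/s$ and $a=\lambda_1/(4|p|^2)-1/2$. When all $\nabla^2 w(x_i)$ are nondegenerate, the ALL lemma gives $\Tr A\le\big(\sum_i t_i/g(w(x_i))\big)^{-1}$ for $(p,A)$ in the subjet. This is an upper bound on $\Tr A$; you wrote $\ge$. So you need $s\mapsto s/(s^2+a)$ to be convex on the range of $w$, i.e.\ $s^2\ge 3a$ when $a>0$. Since $a\to+\infty$ as $|p|\to 0$, no choice of $\k$ can work unless $|p|$ is bounded \emph{below} at contact points. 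An upper bound (Li--Yau, or your slab comparison), and any optimization over $|p|\le\cdots$, cannot close the argument. Brascamp--Lieb does not help either. At two contact points $x_i\neq x_j$ one has $w(x_i)-w(x_j)=\langle p,x_i-x_j\rangle$, hence $\langle\nabla(w^2)(x_i)-\nabla(w^2)(x_j),x_i-x_j\rangle=2\langle p,x_i-x_j\rangle^2$, which is trivially $\ge 0$. The missing idea is to feed this identity into the Andrews--Clutterbuck quantitative logconcavity \eqref{f:AC}, applied to $-\log u$, whose gradient is $\nabla(w^2)$. With Cauchy--Schwarz and $\tan s\ge s$ it yields $|p|^2\ge\pi^2/(2D_\Om^2)$, hence $3a\le\frac32(\lambda_1D_\Om^2/\pi^2-1)\le-\log\k\le w^2$. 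Your intuition that the profile $\cos(\pi t/D_\Om)$ lies behind the constant is right, but it enters through this two-point modulus of concavity, not through a pointwise bound. The factor $3$ is simply the inflection point of $s/(s^2+a)$; nothing is optimized.

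\textbf{Your closing step has no mechanism behind it.} You say a ``strict inequality \dots yields the contradiction,'' but nothing supports this. The operator is not proper: the $s$-derivative of $\frac1s[(2s^2-1)|\xi|^2+\lambda_1/2]$ has the sign of $(2s^2+1)|\xi|^2-\lambda_1/2$. Both $w$ and $w^{**}$ are $+\infty$ on $\partial\Om$. The equation also admits the ordered family of solutions $(-\log(c\,u))^{1/2}$, $0<c<1$. So knowing that $w^{**}\le w$ is a supersolution does not force $w^{**}=w$, and no comparison principle is available.

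The paper closes variationally instead:
\begin{itemize}
\item $u_\k:=\k^{-1}e^{-(w^{**})^2}$ is a viscosity subsolution of $-\Delta u\le\lambda_1 u$, hence an a.e.\ subsolution by Alexandrov's theorem.
\item It satisfies $\max u_\k=1$.
\item It is Lipschitz, using the Li--Yau bound $|\nabla u|^2+\lambda_1u^2\le\lambda_1$ together with $\nabla w^{**}(x)=\nabla w(y_x)$ for a contact point with $w(y_x)\le w^{**}(x)$.
\item It vanishes on $\partial\Om$, so $u_\k\in H^1_0(\Om)$ with Rayleigh quotient $\le\lambda_1$.
\end{itemize}
Simplicity of $\lambda_1$ and the normalization then give $u_\k=u$, i.e.\ $w^{**}=w$. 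That is the only place where an upper gradient bound is used.
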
 

\smallskip 
 \begin{remark}\label{r:kappino} In order to estimate the size of  the constant $\overline \k (\Om)$ in \eqref{f:kappa},   observe that it is a monotone decreasing function of   the product $ \lambda _ 1 (\Om) D _\Om^2 $. 
 By the Faber-Krahn inequality and the isodiametric inequality,    this product is minimized by balls, 
while it diverges to $+ \infty$ along sequences of thinning convex domains.  Consequently, the value of $\overline k (\Om)$ is  maximized by balls, whereas it degenerates to $0$ along sequences of thinning convex domains. Thus, denoting by $B _n$ a ball of radius $1$  in  $\R^n$, and by  $j _{\nu, 1}$ the first zero of the Bessel function  $J _\nu$,  with $\nu = \frac{n}{2}- 1$,  for any 
open bounded convex subset   $\Om \subset \R ^n$   
 we have: 
\begin{equation}\label{f:kappamax}  \overline \k (\Om) \leq \overline \k ( B_n) =  \exp \Big [- \frac 3 2 \Big ( \frac{  4 \lambda _ 1 (B_n)   }{\pi ^ 2}  -1 \Big )  \Big ] =  \exp \Big [- \frac 3 2 \Big ( \frac{ 4j _{\nu, 1} ^ 2  }{\pi ^ 2}  -1 \Big )  \Big ]\,.
\end{equation}

Notice that the map  $n \mapsto \overline \k ( B_n)$ is monotone decreasing. For $n = 2$, 
we have $j _{0, 1}\simeq 2.405$, so that 
$\overline \k (B_2) \simeq 0.133$.  

Clearly,  for certain convex domains such as balls or parallelepipeds,  the threshold  $\overline \k (\Om)$  given by \eqref{f:kappa} is 
not optimal. Indeed, 
for these domains a direct  computation of the eigenfunction shows that the threshold can be increased up to $1$; namely, 
$( - \log u ) ^ {1/2}$ is convex. 
At present, we are unable to determine whether such property holds  true for every convex domain.  
 \end{remark} 

\smallskip 
The proof of Theorem \ref{t:12log} is obtained via a nonstandard application of  the
classical convex envelope method by Alvarez--Lasry--Lions \cite{ALL}.
Actually,   the equation 
satisfied by $w_\k:= (- \log (\k u ) ) ^ {1/2}$  is of the kind
$ \Delta w _\k = f (w _\k, \nabla w _\k)$  (see \eqref{f:bvw}), for a function $f$ which  does not guarantee the validity of 
the comparison principle, and does not fit the structure conditions required to apply convex envelope method (nor other classical methods such as the concavity maximum principle by Korevaar \cite{K83, CaSp}, 
or the continuity method by Caffarelli-Friedman \cite{CafFri}).

The novelty of our approach consists  in invoking
the  improved log-concavity inequality for the ground state established by Andrews-Clutterbuck in \cite[Theorem 1.5]{AC11}, 
in order to restore the applicability of the convex envelope method. 
Such inequality asserts  that the tangent function serves as a modulus of concavity for $v = -\log u$, namely
\begin{equation}\label{f:AC}
\langle \nabla v(z) - \nabla v(y), \tfrac{z-y}{|z-y|} \rangle
\geq \frac{2\pi}{D_\Omega} 
\tan\!\Big( \frac{\pi |z-y|}{2D_\Omega} \Big)
\qquad \forall z,y \in \Omega \, , \ z \neq y\,.
\end{equation}
The key idea in our proof is that  the above inequality can be used to derive
a gradient estimate for the convex envelope $w _ \k ^ {**}$ of 
$w _\k$  (see Proposition \ref{p:key}).  This estimate leads us to show that $w _ \k ^ {**}$ is a supersolution to the equation satisfied by $w _\k$, 
 for $\kappa$ below the threshold $\overline{\kappa}(\Omega)$ introduced in \eqref{f:kappa}. 
Once proved this fact, the conclusion is reached by  circumventing the lack of comparison principle, thanks to a variational argument and to the simplicity of the first eigenfunction.

\smallskip

As pointed out in Remark \ref{r:kappino}, the threshold $\overline \k (\Om)$ in Theorem \ref{t:12log} is suboptimal, at least  for some domains.
On the other hand we  are able to establish that,  even beyond this threshold, the ground state enjoys 
the following  {\it local} $(1/2)$-logconcavity property 
in a convex neighbourhood of its maximum point:

\begin{theorem} \label{t:level} Let $\Om$ and $u $ be as in Theorem \ref{t:12log}. 
For every $\k \in (0, 1)$,  
let $w_\k := ( - \log (\k u  )  ) ^ {1/2}$
and let $(w_\k)^{**}$ denote its convex envelope in $\Omega$.
Then there exists $\overline u _\k \in (0, 1)$, with
$\lim \overline u _ \k  = 1$ as $\k   \to 1 ^ -$, such that
$w_\k = (w_\k)^{**}$
on the nonempty convex subset of $\Om$ given by 
\[
\Om_ \k :=  \big \{x \in \Om \ :\  u(x) > \overline u _\k  \big \}\,.
\]
In particular, it holds that
$$ L _{1/2}  (\k u ( ( 1- t )  x + t   y) \geq ( 1-t) L _{1/2} (\k u (x)) + t L _{1/2} (\k u  (y)) \qquad \forall x, y \in \Om _\k\,, \ \forall t \in [0, 1]\,.$$  
\end{theorem}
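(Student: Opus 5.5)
The plan is to prove directly that, for each $x$ in a suitable superlevel set, the tangent plane to $w_\k$ at $x$ lies below $w_\k$ on all of $\Om$. This immediately gives $w_\k(x)=(w_\k)^{**}(x)$. Throughout, write $v=-\log u\ge 0$ and $c=c_\k:=-\log\k>0$, so that $w_\k=(v+c)^{1/2}$.

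The key tool is the Andrews--Clutterbuck inequality \eqref{f:AC}. Fix $x\in\Om$ and $y\in\Om$, $y\neq x$. Set $e=(y-x)/|y-x|$ and $g(s)=v(x+se)$ for $s\in[0,|y-x|]$. Applying \eqref{f:AC} to the pair $x+se,\,x$ gives
$$g'(s)-g'(0)\ \ge\ \frac{2\pi}{D_\Om}\tan\Big(\frac{\pi s}{2D_\Om}\Big)\ \ge\ \frac{\pi^2}{D_\Om^2}\,s ,$$
using $\tan t\ge t$. Integrating yields
$$v(y)\ \ge\ v(x)+a s+b s^2,\qquad a=\pscal{\nabla v(x)}{e},\quad b=\frac{\pi^2}{2D_\Om^2},\quad s=|y-x| .$$
Let $A=v(x)+c>0$. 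The tangent plane inequality to prove is
$$w_\k(y)\ \ge\ \sqrt A+\frac{a s}{2\sqrt A}.$$
If the right-hand side is negative this is trivial, since $w_\k\ge0$. Otherwise we square both sides. It then suffices to have $A+as+bs^2\ge A+as+\frac{a^2s^2}{4A}$, that is $a^2\le 4Ab$. Since $|a|\le|\nabla v(x)|$, a sufficient condition, uniform in $y$, is
$$|\nabla v(x)|^2\ \le\ \frac{2\pi^2}{D_\Om^2}\,\big(v(x)+c_\k\big). \qquad (\ast)$$
So every $x$ satisfying $(\ast)$ admits an affine minorant of $w_\k$ on $\Om$ that touches at $x$, and hence $w_\k(x)=(w_\k)^{**}(x)$. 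Because $v\ge0$, the condition $(\ast)$ certainly holds wherever $|\nabla v(x)|^2\le 2\pi^2 c_\k/D_\Om^2$.

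The remaining step is to choose $\overline u_\k$, and I expect this to be the main (though mild) obstacle. We need that $\nabla v$ is small on high superlevel sets of $u$, and that these sets shrink to a point. Taking $y\to z$ in \eqref{f:AC} shows that $v$ is strictly convex, with $D^2v\ge (\pi^2/D_\Om^2)I$. Hence $u$ has a unique maximum point $x_0$, with $\nabla v(x_0)=0$. The sets $\{u\ge s\}$ are compact, convex (because $u$ is logconcave), and nested, and they shrink to $\{x_0\}$ as $s\to1^-$. Define
$$\overline u_\k:=\inf\Big\{s\in(0,1):\ |\nabla v|^2<\tfrac{2\pi^2 c_\k}{D_\Om^2}\ \text{on}\ \{u>s\}\Big\}.$$
Continuity of $\nabla v$ at $x_0$ and $c_\k>0$ give $\overline u_\k<1$. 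It must also be checked that $\overline u_\k>0$, which follows for instance from $|\nabla v|\to\infty$ at $\partial\Om$; if necessary one can simply replace $\overline u_\k$ by $\max\{\overline u_\k,\tfrac12\}$.

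As $\k\to1^-$ we have $c_\k\to0$. Strong convexity gives $|\nabla v(x)|\ge(\pi^2/D_\Om^2)|x-x_0|$, so $|\nabla v|$ is bounded below away from $x_0$. This forces $\Om_\k$ to shrink to $x_0$, and hence $\overline u_\k\to1$. The set $\Om_\k$ is nonempty, since it contains $x_0$, and it is convex. On $\Om_\k$ we therefore have $w_\k=(w_\k)^{**}$, so $w_\k$ is convex there. Since $L_{1/2}(\k u)=-w_\k$, this is exactly the stated concavity inequality for $x,y\in\Om_\k$ and $t\in[0,1]$.
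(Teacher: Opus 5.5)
Your proof is correct. It reaches the result by a genuinely different route from the paper in two places.

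\emph{The core step.} The paper argues by contradiction. If $(w_\k)^{**}(x)<w_\k(x)$ at some $x\in\Om_\k$, it takes an optimal decomposition of $x$ (Lemma \ref{l:bdr}). It notes that one optimal point $x_{i_0}$ must lie in $\Om_\k$. It then plays the lower bound $|\nabla w_\k(x_{i_0})|^2\ge \pi^2/(2D_\Om^2)$ against the Li--Yau upper bound of Lemma \ref{l:yau}; that lower bound is Proposition \ref{p:key}, which combines the common gradient at optimal points with \eqref{f:AC}.

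You instead integrate \eqref{f:AC} along segments to get a quadratic lower bound for $v$. From it you show directly that the tangent plane of $w_\k$ at $x$ is a global affine minorant on $\Om$ whenever $(\ast)$ holds. Since $|\nabla w_\k|^2=|\nabla v|^2/(4(v+c_\k))$, condition $(\ast)$ is exactly $|\nabla w_\k(x)|^2\le \pi^2/(2D_\Om^2)$, the same threshold as in Proposition \ref{p:key}. Your lemma is therefore a clean pointwise criterion: every point of $\Om$ where $|\nabla w_\k|$ is below this threshold is a contact point of $w_\k$ with $(w_\k)^{**}$. It needs neither the existence of optimal decompositions nor the common-gradient property.

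\emph{The choice of $\overline u_\k$.} You use a qualitative argument: continuity of $\nabla v$ at $x_0$, strong convexity $|\nabla v(x)|\ge \pi^2|x-x_0|/D_\Om^2$, and you discard the term $v(x)$ in $(\ast)$. So your $\overline u_\k$ is not explicit, but it is enough, since the theorem only asserts existence with $\overline u_\k\to1$.

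The paper instead uses Lemma \ref{l:yau}, which gives $|\nabla v|^2\le\lambda_1(\Om)\big(\k^2e^{2w_\k^2}-1\big)$. This yields the explicit level $\overline u_\k=\k^{-1}e^{-\overline w_\k^2}$, depending only on $\k$, $\lambda_1(\Om)$ and $D_\Om$. Inserting this bound into your $(\ast)$ without discarding $v(x)$ reduces it exactly to $\Psi_\k(w_\k(x))\le\pi^2/(\lambda_1(\Om)D_\Om^2)$, i.e.\ to $w_\k(x)\le\overline w_\k$. So your criterion plus Lemma \ref{l:yau} recovers the paper's explicit set, even including its boundary level.

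One minor point: your claim $|\nabla v|\to\infty$ at $\partial\Om$ is true but unjustified as written. It follows from convexity, since $|\nabla v(x)|\ge (v(x)-v(x_0))/D_\Om$. In any case your fallback $\max\{\overline u_\k,\tfrac12\}$ makes it unnecessary.
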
 

The proof of Theorem \ref{t:level} again relies on  the improved concavity inequality \eqref{f:AC}, this time combined with a classical gradient estimate for the ground state. 

\smallskip 
At present, it is unclear to us whether the local result given in Theorem \ref{t:level} 
might be helpful as a starting point for improving the global result given in Theorem \ref{t:12log}.

\smallskip
The paper is organized as follows: 
after providing some preliminaries in Section \ref{sec:prel}, the proofs of Theorems \ref{t:12log} and \ref{t:level} are given respectively in Sections
\ref{sec:proof1} and \ref{sec:proof2}.

\section{Preliminaries}\label{sec:prel} 

In this section we provide  some lemmas needed in the proof of Theorem \ref{t:12log}. 

We begin by quoting, without proof, a classical Li-Yau-type gradient estimate for the first Dirichlet eigenfunction, 
which can be found, for instance,
in \cite[Corollary 5.1]{sperb} (see also  \cite{LiYau, yang99}).

\begin{lemma} \label{l:yau}
 Let $\Om$ be an open bounded subset of $\R ^n$, let $u$ be its first Dirichlet Laplacian eigenfunction, normalized so that $\max _{\overline \Om } u =  1$,  and let 
$\lambda _ 1 (\Om)$ denote  the first Dirichlet Laplacian eigenvalue of $\Om$.   Then it holds that
 $$|\nabla u (x)| ^ 2 + \lambda _ 1 (\Om)  | u (x)| ^ 2 \leq \lambda _ 1  (\Om) \qquad \forall x \in \Om \,.$$ 
\end{lemma}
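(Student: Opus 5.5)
Since Lemma \ref{l:yau} is quoted from the literature, I only sketch how I would prove it, along the lines of the Payne--Sperb $P$-function method combined with the Li--Yau device of stepping slightly away from the maximum value $1$. Fix $\varepsilon>0$ and set $w:=u/(1+\varepsilon)$, so that $0<w\le (1+\varepsilon)^{-1}<1$ in $\Om$, $w=0$ on $\partial\Om$, and $\Delta w=-\lambda_1 w$, where $\lambda_1=\lambda_1(\Om)$. Write $w=\sin\psi$ with $\psi\in(0,\pi/2)$. The claim for $w$ is $|\nabla w|^2\le\lambda_1(1-w^2)$, which is equivalent to
\[
F:=|\nabla\psi|^2=\frac{|\nabla w|^2}{1-w^2}\le\lambda_1 .
\]
Letting $\varepsilon\to0$ then gives $|\nabla u|^2\le\lambda_1(1-u^2)$, which is the statement.

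\textbf{Step 1 (equation for $\psi$).} From $\Delta w=-\lambda_1 w$ one computes
\[
\Delta\psi=\tan\psi\,\bigl(|\nabla\psi|^2-\lambda_1\bigr)=\tan\psi\,(F-\lambda_1).
\]

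\textbf{Step 2 (interior maximum).} Suppose $F$ attains its maximum at an interior point $x_0$ with $F(x_0)>\lambda_1$. At $x_0$ we have $\nabla F=0$, which means $D^2\psi\,\nabla\psi=0$, and $\Delta F\le0$. Bochner's formula gives
\[
\tfrac12\Delta F=|D^2\psi|^2+\pscal{\nabla\psi}{\nabla\Delta\psi}.
\]
Using Step 1 and $\nabla F(x_0)=0$,
\[
\pscal{\nabla\psi}{\nabla\Delta\psi}=\sec^2\psi\,F\,(F-\lambda_1)>0 \quad\text{at } x_0 .
\]
Hence $\tfrac12\Delta F(x_0)\ge \sec^2\psi\,F(F-\lambda_1)>0$, a contradiction. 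So $F\le\lambda_1$ wherever the maximum is interior.

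\textbf{Step 3 (boundary).} This is the main obstacle. On $\partial\Om$ we have $\psi=0$ and $F=|\nabla u|^2/(1+\varepsilon)^2$. For a non-convex boundary, the normal derivative $\partial_\nu F=2\psi_\nu\psi_{\nu\nu}$ carries a mean-curvature term of the wrong sign, so a Hopf-type argument does not exclude a boundary maximum.

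I would first try to avoid the boundary by working on the superlevel sets $\{w>\delta\}$. On these sets the substitution $w=\sin\psi$ can be replaced by a modified one, $w=a\sin\psi+b$, whose boundary values are a constant strictly between $0$ and the top. Then at a boundary maximum $\nabla\psi$ is normal and the tangential Hessian is controlled by the level-set geometry. Alternatively, I would exploit the Li--Yau observation that near $\partial\Om$ the quantity $\lambda_1 u^2$ is small, so it suffices to bound $|\nabla u|$ there. That bound can be pushed inward by applying Step 2 to $G:=F+\eta\,\phi(\psi)$ with a small auxiliary $\phi$ vanishing at $\psi=0$ and increasing, chosen so that $G$ cannot achieve its maximum on $\partial\Om$. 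One then lets $\eta\to0$ and $\varepsilon\to0$.

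If these attempts fail in full generality, I would at least obtain the estimate for smooth domains by approximation, and then pass to general bounded $\Om$ using the convergence of eigenfunctions and of $\lambda_1$ under interior approximation. For the convex domains relevant in this paper, $\partial_\nu F\le0$ holds on $\partial\Om$ directly, so the boundary step is harmless there.
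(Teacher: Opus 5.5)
There is a genuine gap in your Step 3, and it cannot be closed. For arbitrary bounded open sets the lemma is false, so the boundary term you identified is a real obstruction, not a technical one.

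Take the L-shaped domain $\Om=(-1,1)^2\setminus[0,1)^2$, whose reentrant corner at the origin has opening $3\pi/2$. Work in a small sector $S$ at the origin, with $\theta$ measured from one edge. There $u$ is superharmonic, since $-\Delta u=\lambda_1(\Om)u>0$. By the Hopf lemma on the straight edges, $u\ge c_0\rho^{2/3}\sin(\tfrac23\theta)$ on the arc $r=\rho$. Comparing with the harmonic function $r^{2/3}\sin(\tfrac23\theta)$ gives $u\ge c_0r^{2/3}$ along the bisector. So $u$ is not Lipschitz at the corner and $|\nabla u|$ is unbounded in $\Om$, whereas the lemma would give $|\nabla u|\le\sqrt{\lambda_1(\Om)}$.

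Rounding the corner from inside gives smooth domains $\Om_\delta\uparrow\Om$ for which the estimate must also fail. Indeed, a uniform bound $|\nabla u_\delta|\le\sqrt{\lambda_1(\Om_\delta)}$ would pass to the limit in $C^1_{\rm loc}(\Om)$. Hence none of your fallback strategies can succeed: superlevel sets, the perturbation $G=F+\eta\phi(\psi)$, and approximation by smooth domains all fail.

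The underlying reason is exactly your boundary computation. With $\nu$ the outer normal, $\Delta\psi=0$ on $\partial\Om$ gives $\partial_\nu F=2\psi_\nu\psi_{\nu\nu}=-2H\psi_\nu^2$, where $H$ is the sum of the principal curvatures. This term is not nonpositive, and so gives no contradiction, wherever $H<0$. The correct statement needs a hypothesis such as convexity of $\Om$, or $H\ge0$ for smooth $\partial\Om$. This is harmless for the paper, which only quotes the lemma and only applies it to convex domains.

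In that setting your argument works, provided you write out the Hopf step. By Step 2, a maximum value $F(x_0)>\lambda_1$ can only be attained at points $x_0\in\partial\Om$. Near such a point, $\tfrac12\Delta F-\tan\psi\,\pscal{\nabla\psi}{\nabla F}=|D^2\psi|^2+\sec^2\psi\,F(F-\lambda_1)>0$, with bounded coefficients. The Hopf lemma then gives $\partial_\nu F(x_0)>0$, contradicting $\partial_\nu F=-2H\psi_\nu^2\le0$. Nonsmooth convex $\Om$ are then handled by interior approximation with smooth convex domains.

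Compare this with the classical $P$-function proof (Payne, Sperb), which uses the maximum principle for $P=|\nabla u|^2+\lambda_1u^2$. That principle degenerates at critical points of $u$, where one reads off $P=\lambda_1u^2\le\lambda_1$. Your $\varepsilon$-shift and the substitution $w=\sin\psi$ instead keep the equation for $F$ nonsingular up to the maximum point of $u$. The boundary ingredient is the same in both.
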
 

Next, we provide 
two basic statements concerning
 the convex envelope. 
Recall that, if $\Om$ is an open bounded convex subset  of $\R^n$,
 for any  $w : \Om \to \R$,  its convex envelope   in $\Om$ is the function 
$w ^ { **}:\Om \to \R $ defined by 
 \begin{equation}\label{f:w**} 
w^{**} ( x):= 
\inf \Big \{ \sum_{ i = 1}^{m} t _i w(x_i) \ :\ 
m \in \N,\
t _ i\geq 0,\ x_i \in \Omega\,, \
\sum _{i = 1} ^ m t _ i = 1,\ 
\sum _{i = 1} ^ m t _ i x_i = x  \Big \}.
\end{equation}

The first statement  (Lemma \ref{l:bdr}) concerns some features of  $w^{**}$,  holding when $w$ is  lower semicontinuous and diverges to $+ \infty$ at the boundary of $\Om$.  
Although the proof relies on standard arguments, we include it for the sake of self-containedness.

\smallskip
The second statement (Lemma \ref{l:ALLp1}) is essentially a less technical version of Proposition  1 in \cite{ALL}, 
holding when $w$ is sufficiently smooth.   It
gathers some information about the second order subjet of $w ^ {**} $  at $x$, 
that we  denote by the usual notation $\subjet w^{**}(x)$. (Recall that, by definition,  it consists of all pairs 
$( p, A) \in \R ^n \times \R ^ { n\times n} _{{\rm sym}} $ for which there exists a smooth function $\varphi$ on $\R ^n$ 
such that $u - \varphi$ has a local minimum at $x$ and $(p, A) = (\nabla \varphi (x), \nabla ^ 2 \varphi (x))$.)

\begin{lemma}
\label{l:bdr}
Let $\Omega$ be an open bounded convex subset of $\R^n$,  and 
let $w\colon\Omega\to\R$ be a lower semicontinuous function
satisfying
$w(x) \to +\infty$ as $\dist(x, \partial\Omega) \to 0$.

Then $w^{**}(x) \to +\infty$ 
as $\dist(x, \partial\Omega) \to 0$.

Moreover, for every $x\in\Omega$, there exist
$t_1, \ldots, t_m > 0$, $m\leq n+1$, with $\sum_{i=1}^m t_i = 1$,
and points $x_1, \ldots, x_m \in \Omega$, such that 
\begin{equation}
\label{f:min}
w^{**}(x) = \sum_{i=1}^m t_i w(x_i),
\end{equation}
i.e., the infimum which defines $w ^ {**}$ according to \eqref{f:w**} is attained.
\end{lemma}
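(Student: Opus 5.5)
The plan has two parts: first the boundary divergence of $w^{**}$, then attainment of the infimum in \eqref{f:w**}. Throughout we use that $w$ is bounded below on $\Omega$. Indeed, for each $M$ the set $K_M:=\{x\in\Omega:\ \dist(x,\partial\Omega)\ge \delta_M\}$ can be chosen compact with $w> M$ off $K_M$, and the lower semicontinuous $w$ attains its minimum on $K_M$. So $m_0:=\inf_\Omega w>-\infty$.

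\emph{Step 1 (divergence).} Fix $M>m_0$ and choose $\delta>0$ such that $w\ge M$ on $\Omega\setminus K$, where $K:=\{\dist(\cdot,\partial\Omega)\ge\delta\}$. Consider a convex combination $x=\sum t_i x_i$ with $\sum t_i w(x_i)\le w^{**}(x)+1$, and let $s:=\sum_{x_i\in K} t_i$ be the mass sitting in $K$. Set $y:=s^{-1}\sum_{x_i\in K}t_ix_i\in\operatorname{conv}K$. Since $\Omega$ is convex, $\dist(\cdot,\partial\Omega)$ is concave, so $\operatorname{conv}K\subset K$ and $y\in K$. Writing $x=sy+(1-s)z$ with $z\in\Omega$, we get $\dist(x,\partial\Omega)\ge s\delta$. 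Hence, if $\dist(x,\partial\Omega)<s_0\delta$, then $s<s_0$, and
\[
w^{**}(x)+1\ge s\,m_0+(1-s)M\ge \min(m_0,0)+(1-s_0)M .
\]
Taking $s_0=1/2$ gives $w^{**}(x)\ge M/2+\min(m_0,0)-1$ whenever $\dist(x,\partial\Omega)<\delta/2$. Since $M$ is arbitrary, $w^{**}\to+\infty$ at the boundary.

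\emph{Step 2 (attainment).} By Carathéodory's theorem applied to the epigraph, it suffices to take the infimum over combinations with $m=n+1$ points, allowing some $t_i=0$. Take a minimizing sequence $(t_i^k,x_i^k)_{i=1}^{n+1}$ and pass to subsequences so that $t_i^k\to t_i$ and $x_i^k\to x_i\in\overline\Omega$.

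The main obstacle is to rule out limit points with $t_i>0$ and $x_i\in\partial\Omega$. If this happened, $w(x_i^k)\to+\infty$, and since $t_i^k$ stays bounded away from $0$ while the other terms are bounded below by $m_0$, the sums $\sum_i t_i^k w(x_i^k)$ would diverge. This contradicts their convergence to the finite value $w^{**}(x)\le w(x)$.

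The indices with $t_i=0$ also need care: the terms $t_i^k w(x_i^k)$ are bounded below by $t_i^k m_0\to0$, so their $\liminf$ is $\ge 0$ and they can simply be discarded. For the indices with $t_i>0$, the limits $x_i$ lie in $\Omega$, and lower semicontinuity gives
\[
\sum_{t_i>0} t_i\,w(x_i)\le \liminf_k \sum_i t_i^k w(x_i^k)=w^{**}(x).
\]
The points with $t_i>0$ still satisfy $\sum t_i=1$ and $\sum t_ix_i=x$, because the discarded ones have vanishing weight and bounded positions. So this is an admissible competitor, and equality holds by the definition of $w^{**}$. This yields \eqref{f:min} with $m\le n+1$ and all $t_i>0$.
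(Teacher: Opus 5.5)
Your proof is correct and follows essentially the same route as the paper. Step 1 is a direct version of the paper's contradiction argument: the paper normalizes $w\ge 1$ and localizes the mass in the sublevel set $\{w\le C+2\}$ rather than in $\{\dist(\cdot,\partial\Omega)\ge\delta\}$, but both rest on the concavity of $\dist(\cdot,\partial\Omega)$. Step 2 is the same Carath\'eodory-plus-compactness argument, with boundary limit points forced to carry zero weight and lower semicontinuity closing the estimate. The only detail to tidy is that $(1-s)M\ge(1-s_0)M$ requires $M\ge 0$, which is harmless since you let $M\to+\infty$.
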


\begin{proof}
Let us prove first that $w^{**}(x) \to +\infty$ 
as $\dist(x, \partial\Omega) \to 0$.

Since, under our assumptions, $w$ is bounded from below,
by possibly adding a constant to $w$ it is not restrictive to 
assume that $w\geq 1$ in $\Omega$.
Assume by contradiction that there exists a sequence $\{x_h\}\subset \Omega$ and a constant $C>1$ such that
\[
\lim_{h\to +\infty} \dist (x_h, \partial\Omega) = 0,
\qquad
w^{**}(x_h) \leq C\quad \forall h\in\N.
\]
For every $h\in\N$, there exist points
$x_h^1,\ldots, x_h^{m_h}\in\Omega$
 and positive numbers $t_h^1, \ldots , t_h^{m_h}> 0$ such that
\[
\sum_{i=1}^{m_h} t_h^i = 1,
\qquad
x_h = \sum_{i=1}^{m_h} t_h^i x_h^i,
\qquad
\sum_{i=1}^{m_h} t_h^i w(x_h^i) \leq w^{**}(x_h) +1\,.
\]
Let $K := \{x\in\Omega\colon w(x) \leq C+2\}$.
By assumption, $K$ is a compact subset of $\Omega$, 
so that $\dist(K, \partial\Omega) > 0$.
For every $h\in\N$, let
\[
I_h :=\big  \{i\in\{1,\ldots,m_h\}\colon x_h^i\in K \big \},
\qquad
J_h := \big  \{i\in\{1,\ldots,m_h\}\colon x_h^i\not\in K \big \},
\qquad
\alpha_h := \sum_{i\in I_h} t_h^i\,.
\]
We have that
\[
\begin{split}
C & \geq w^{**}(x_h) \geq   \sum_{i=1}^{m_h} t_h^i w(x_h^i) -1
=
\sum_{i\in I_h} t_h^i w(x_h^i)
+ \sum_{i\in J_h} t_h^i w(x_h^i) -1
\\ & \geq
\sum_{i\in I_h} t_h^i w(x_h^i) + (C+2) (1-\alpha_h) -1\,.
\end{split}
\]
Taking into account that $w\geq 1$, from this estimate
we obtain that
\[
\alpha_h \leq \sum_{i\in I_h} t_h^i w(x_h^i)
\leq (C+2)\alpha_h - 1,
\]
i.e.,
$\alpha_h \geq 1/(C+1)$.
Finally, by the concavity of the distance function,
\[
\dist(x_h, \partial\Omega) \geq
\sum_{i=1}^{m_h} t_h^i \dist(x_h^i, \partial\Omega)
\geq
\sum_{i\in I_h} t_h^i \dist(x_h^i, \partial\Omega)
\geq \frac{1}{C+1}\, \dist(K, \partial\Omega), 
\]
in contradiction with the assumption
$\dist(x_h, \partial\Omega)\to 0$.

\smallskip
Let us prove now the second part of the lemma.
Let $x\in\Omega$.
By Carathéodory's theorem, for every $h\in\N$ there exist points
$x^h_1, \ldots, x^h_{n+1} \in \Omega$
and non-negative numbers $t^ h_1, \ldots, t^ h_{n+1}$, such that
\begin{equation}
\label{f:approx}
\sum_{i=1}^{n+1} t^ h_i = 1, 
\qquad
w^{**}(x) \geq \sum_{i=1}^{n+1} t^ h_i w(x^ h_i) - \frac{1}{h}\,.
\end{equation}
Since $\Omega$ is bounded, we can extract a subsequence (not relabeled) such that
\[
\lim_{h \to +\infty} x^ h_i= x_ i \in \overline{\Omega},
\qquad
\lim_{h \to +\infty}  t^ h_i= t_ i \in [0,1]\,.
\]
Clearly, $t_i\geq 0$ and $\sum_{i=1}^{n+1} t_i = 1$.
Without loss of generality, we can assume that 
there exists $m\in \{1, \ldots, n+1\}$ such that 
$t_1, \ldots, t_m > 0$ and $t_i = 0$ for $i> m$, $i\leq n+1$.

Since our assumptions  imply in particular that $w$ is nonnegative,  
by \eqref{f:approx}
we have that, for every $i = 1, \ldots, n+1$,
\[
w^{**}(x)
\geq t^ h_i w(x^h_i) - \frac{1}{h}\,.
\]
Now, if $x_i\in\partial\Omega$ for some $i\in \{1,\ldots, n+1\}$, 
by passing to the limit as $h\to +\infty$ in the above inequality
and taking into account that  $w(x^ h_i) \to +\infty$,
we deduce that $t_i = 0$, so that $i\geq m+1$.
As a consequence, the points $x_1, \ldots, x_m$ belong to $\Omega$,
and $x = \sum_{i=1}^m t_i x_i$.
Finally, by  using the definition of $w^{**}$ and the inequality in \eqref{f:approx}, we obtain
\[
\sum_{i=1}^m t_i w(x_i)
\geq w^{**}(x) \geq
\liminf_{h\to +\infty} \sum_{i=1}^{n+1} t^ h_i w(x^ h_i)
\geq 
\liminf_{h\to +\infty} \sum_{i=1}^{m} t^ h_i w(x^ h_i)
\geq
\sum_{i=1}^{m} t_i w(x_i),
\]
so that \eqref{f:min} follows.
\end{proof}

\bigskip
\begin{lemma}
\label{l:ALLp1}
Let $\Omega$ be an open, bounded, convex subset of $\R^n$,  and 
let $w\in C^2(\Omega)$ be such that $w(x) \to +\infty$ as $\dist(x, \partial\Omega) \to 0$.

Given $x\in\Omega$, let $t_1, \ldots, t_m > 0$, $m\leq n+1$, with $\sum_{i=1}^m t_i = 1$,
and let $x_1, \ldots, x_m \in \Omega$ be optimal for the infimum defining $w ^ {**} (x)$ according to \eqref{f:w**}.

If $(p,A) \in \subjet   w^{**}(x)$,  then 
the following properties hold:
\begin{itemize}
\item[(i)] $\nabla w(x_i) = p$
for every $i = 1,\ldots,m$;
\smallskip

\item[(ii)] the matrices $A_i := \nabla ^2 w(x_i)$ are positive semidefinite for every $i = 1,\ldots,m$;
\smallskip

\item[(iii)] if all the matrices $A_i$, $i=1,\ldots,m$, are positive definite,
then
\[
A \leq \left(\sum_{i=1}^m t_i A_i^{-1}\right)^{-1},
\qquad
\Tr(A) \leq 
\left(\sum_{i=1}^m t_i \Tr (A_i^{-1})\right)^{-1};
\]

\item[(iv)] if at least one of the matrices $A_i$ is degenerate, then
$\Tr(A) \leq 0$.
\end{itemize}
\end{lemma}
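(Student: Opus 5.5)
The plan is to reproduce the Alvarez--Lasry--Lions argument in the smooth setting. Fix $x\in\Om$, optimal weights $t_i>0$ and points $x_i\in\Om$ as given by Lemma \ref{l:bdr}, and a test function $\varphi$ touching $w^{**}$ from below at $x$ with $(\nabla\varphi(x),\nabla^2\varphi(x))=(p,A)$. The basic device is to perturb the optimal decomposition. For vectors $y_1,\dots,y_m$ with $\sum_i t_i y_i = y$ small, the points $x_i+y_i$ stay in $\Om$ and average to $x+y$. By the definition \eqref{f:w**},
\[
\varphi(x+y) - \varphi(x) \le w^{**}(x+y)-w^{**}(x)\le \sum_{i=1}^m t_i\big(w(x_i+y_i)-w(x_i)\big),
\]
where we used that $w^{**}(x)=\sum_i t_i w(x_i)$. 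This single inequality, holding with equality at $y_i=0$, will be exploited to first and second order.

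For (i), take $y_j = s e/t_j$ for a fixed index $j$, with $y_i=0$ for $i\ne j$; then $y=se$. Comparing first-order terms as $s\to0^\pm$ gives $\langle p,e\rangle = \langle \nabla w(x_j),e\rangle$ for all $e$, so $\nabla w(x_j)=p$. To get a common $p$ for all $j$ one can also use $y_i = y$ for all $i$; either way each index is treated separately.

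For (ii), take $y=0$, that is, shifts $y_j = s e$ and $y_k = -s t_j e/t_k$ for two indices $j\ne k$. The left side vanishes, so the function $s\mapsto \sum_i t_i w(x_i+y_i)$ has a minimum at $s=0$. This gives $t_j\langle A_j e,e\rangle + (t_j^2/t_k)\langle A_k e,e\rangle\ge 0$, which only yields a combination of $A_j$ and $A_k$. The cleaner route is a different remark: $x_i$ is a point where $w$ touches its convex envelope, since $w^{**}$ is affine along the simplex and $w^{**}(x_i)=w(x_i)$ by optimality. Hence $w-\ell\ge0$ with equality at $x_i$, where $\ell$ is the supporting affine function of $w^{**}$ at $x$, which has slope $p$. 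So $x_i$ is a local minimum of $w-\ell$, which gives $\nabla w(x_i)=p$ (recovering (i)) and $\nabla^2 w(x_i)\ge0$, proving (ii). To justify the existence of $\ell$, I will use the subdifferential of the convex function $w^{**}$ at $x$, which contains $p$ because $\varphi$ touches from below.

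For (iii), take $y_i = A_i^{-1}B\,y$, where $B=(\sum_i t_i A_i^{-1})^{-1}$, so that $\sum_i t_i y_i=y$. A second-order Taylor expansion, using (i) to cancel the first-order terms, gives
\[
\tfrac12\langle Ay,y\rangle+o(|y|^2)\le \tfrac12\sum_i t_i\langle A_iy_i,y_i\rangle+o(|y|^2)=\tfrac12\langle By,y\rangle+o(|y|^2),
\]
hence $A\le B$. For the trace, I will apply the inequality coordinatewise in a suitable orthonormal basis $e_k$: for each $k$, minimize $\sum_i t_i\langle A_i y_i,y_i\rangle$ under the constraint $\sum t_iy_i=e_k$ with the scalar ansatz $y_i = c_i e_k$. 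This gives $\langle Ae_k,e_k\rangle \le (\sum_i t_i\langle A_ie_k,e_k\rangle^{-1})^{-1}$. Then $\langle A_ie_k,e_k\rangle^{-1}\le\langle A_i^{-1}e_k,e_k\rangle$ by Cauchy--Schwarz, and concavity of the harmonic mean, i.e. the fact that $(a,b)\mapsto (\sum t_i a_i^{-1})^{-1}$ is concave and superadditive, lets us sum over $k$ to obtain $\Tr A\le(\sum_i t_i\Tr A_i^{-1})^{-1}$. I expect this trace step to be the main obstacle. Superadditivity of $a\mapsto(\sum t_ia_i^{-1})^{-1}$ holds since the function is concave and positively homogeneous, and that settles it.

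For (iv), if $A_j e=0$ for some $e\ne0$, take $y_j=se/t_j$ and the other $y_i=0$. This gives $\langle Ae,e\rangle\le (1/t_j)\langle A_je,e\rangle=0$ in direction $e$. In the orthogonal directions we get bounds depending on $1/t_j$ that are not zero, so to obtain a trace bound I will combine this with the trace estimate of (iii), applied to perturbed positive definite matrices $A_i+\varepsilon I$; the inequality (iii) holds for these since $\sum t_i\langle A_i y_i,y_i\rangle$ only increases. Letting $\varepsilon\to0$, the term $t_j\Tr((A_j+\varepsilon I)^{-1})\to\infty$, which forces $\Tr A\le0$.
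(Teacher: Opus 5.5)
Your arguments for (i), (ii) and for the matrix inequality $A\le(\sum_i t_iA_i^{-1})^{-1}$ in (iii) are correct. For (ii), using the affine minorant $\ell$ with slope $p\in\partial w^{**}(x)$, which must touch $w$ at every $x_i$ because all $t_i>0$, is a clean way to get each $\nabla^2w(x_i)\ge0$.

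The gap is the trace inequality. Cauchy--Schwarz gives $\langle A_ie_k,e_k\rangle^{-1}\le\langle A_i^{-1}e_k,e_k\rangle$, hence $\bigl(\sum_it_i\langle A_ie_k,e_k\rangle^{-1}\bigr)^{-1}\ge\bigl(\sum_it_i\langle A_i^{-1}e_k,e_k\rangle\bigr)^{-1}$. That is a lower bound on the quantity you need to bound from above. Even without that step, superadditivity over $k$ produces $\bigl(\sum_it_i(\Tr A_i)^{-1}\bigr)^{-1}$, not $\bigl(\sum_it_i\Tr(A_i^{-1})\bigr)^{-1}$.

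No repair is possible, because the inequality as printed is false for $n\ge2$. The function $w(y)=(1-|y|^2)^{-1}$ on the unit ball is convex, so at $x=0$ you may take $m=1$, $x_1=0$ and $(0,2I)\in\subjet w^{**}(0)$; the claim then reads $2n\le 2/n$. A genuine $m=2$ failure comes from the example below with $f(s)=(s^2-1)^2$: there $A_i=\mathrm{diag}(8,2)$ and the claim reads $2\le 8/5$.

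What is true, and what your diagonal argument proves once the Cauchy--Schwarz step is dropped, is $\Tr A\le\bigl(\sum_it_i(\Tr A_i)^{-1}\bigr)^{-1}$. This is exactly what the paper's concavity of $Q\mapsto[\Tr(Q^{-1})]^{-1}$ yields when applied to $Q_i=A_i^{-1}$, and it is the form actually used in Proposition~\ref{p:viscosuper}. Your route to it is more elementary. Shorter still: taking $y_i=c_iy$ with scalars satisfying $\sum_it_ic_i=1$ gives $A\le\sum_it_ic_i^2A_i$. Minimizing the trace over $c$ then gives the bound with no definiteness assumption, and the bound becomes $0$ if some $A_i=0$.

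Consequently (iv) cannot be derived as you propose, and your argument here is also the paper's. It needs $t_j\Tr\bigl((A_j+\varepsilon I)^{-1}\bigr)\to\infty$ inside the false bound. In the correct bound the $j$-th term is $t_j/\Tr(A_j+\varepsilon I)\to t_j/\Tr A_j$, which is finite unless $A_j=0$.

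In fact (iv) itself is false. On $\Om=(-2,2)^2$ let $w(y)=f(y_1)+g(y_2)$, where:
\begin{itemize}
\item $f,g\in C^2(-2,2)$ blow up at $\pm2$;
\item $g$ is convex with $g(s)=s^2$ near $0$;
\item $f\ge0$ vanishes only at $\pm1$, with $f(s)=(s+1)^4(s-1)^2$ on $[-\tfrac32,\tfrac32]$.
\end{itemize}
Then $w^{**}(y)=f^{**}(y_1)+g(y_2)=y_2^2$ near $0$. At $x=0$ the points $(\pm1,0)$ with $t_i=\tfrac12$ are optimal, and $A_1=\mathrm{diag}(0,2)$ is degenerate. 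Yet $(0,\mathrm{diag}(0,2))\in\subjet w^{**}(0)$ has trace $2$.

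What survives is your directional bound; indeed $A\le t_j^{-1}A_j$ for each $j$, which gives $A\le0$ if some $A_j=0$. Together with the harmonic-mean bound when all $A_i\neq0$, this is what the proof of Proposition~\ref{p:viscosuper} can legitimately use. It requires splitting cases according to whether some $A_i$ vanishes, rather than whether some $A_i$ is degenerate.
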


\begin{proof}
The properties (i)--(iv) were already discussed  in \cite[p.~272]{ALL}. 
However,  for the sake of clarity, we provide some additional details concerning items (iii) and (iv).
In \cite{ALL} the authors, after establishing (i) and (ii), went on to show that
\begin{equation}
\label{f:minA}
\pscal{Ah}{h} \leq
\sum_{i=1}^m t_i \pscal{A_i h_i}{h_i}
\end{equation}
for every $h, h_1, \ldots, h_m\in \R^n$ such that
$h = \sum_{i=1}^m t_i h_i$.
If all the  matrices $A_i$ are positive definite,  then for every fixed $h$
we can minimize the right-hand side of \eqref{f:minA}
under the constraint $h = \sum_{i=1}^m t_i h_i$,
obtaining  \[
h_i = A_i^{-1} \left(\sum_{i=1}^m t_i A_i^{-1}\right)^{-1}\!\!\! \!\! h ,
\qquad i = 1, \ldots m\,.
\]
Replacing these expressions for  $h _i$ into \eqref{f:minA} yields
the first inequality in (iii).

The second inequality then follows from the concavity
of the map $Q \mapsto [\Tr(Q^{-1})]^{-1}$ on the class of
positive definite symmetric matrices, proved in the Appendix of \cite{ALL}. 

To prove (iv), let $A_{i,N} := A_i +\frac { I }{  N}$, where $I$ denotes the $n\times n$ identity matrix. 
From \eqref{f:minA} we have 
\[
\pscal{Ah}{h} \leq
\sum_{i=1}^m t_i \pscal{A_{i,N} h_i}{h_i}\,.
\]
By performing the same minimization of the right-hand side as above,
we deduce that (iii) holds with the matrices $A_{i,N}$ in place of $A_i$.
If $A_k$ is degenerate for some $k\in \{1,\ldots,m\}$,
then $\Tr(A_{k,N}^{-1}) \to +\infty$ as $N\to +\infty$. Therefore, 
from the second inequality in (iii), we conclude that
$\Tr(A) \leq 0$. 
\end{proof}

\bigskip\bigskip
\section{Proof of Theorem \ref{t:12log}} \label{sec:proof1}

Throughout this section, we assume without any further mention that 
$u$ is the first Dirichlet Laplacian eigenfunction of  an open bounded convex subset $\Om$ of $\R^n$, 
normalized so that $\max_{\overline \Om } u  = 1$. For any $\k \in (0, 1)$, 
we set 
\begin{equation}\label{f:norm} 
w  _\k:= (- \log (\k u) ) ^ { 1/2} \qquad \text{ in } \Om\,. 
\end{equation}   

Observe that $\min_\Omega w_\k = (- \log \k ) ^ { 1/2} > 0$. 
Moreover, some straightforward  computations show that $w _\k $ is a (classical) solution to 
\begin{equation}\label{f:bvw}
 - \Delta w + \frac{1}{w} \big [ ( 2 w ^ 2 -1 ) |\nabla w| ^ 2 +\frac{\lambda _ 1 (\Om) }{2} \big ] = 0 \qquad \text{ in } \Om\,. 
 \end{equation}

We shall denote by $w _\k ^ { **}$ the convex envelope of $w _\k$ defined according to \eqref{f:w**}, 
and by $u _k$ the function defined in $\Om$ by 
\begin{equation}\label{f:normu} 
u_\k := \exp (  - \log \k -  ( w _\k ^ {**} ) ^ 2  ) 
\,, \qquad \text{i.e.},\  w_\k ^ {**} = (- \log (\k u_\k) ) ^ { 1/2}\, .
\end{equation}

We now outline of the main steps of our proof: 
\smallskip

\begin{itemize} 
\item[--]  In Proposition \ref{p:key} we establish a gradient estimate for $w _\k ^ { **}$, valid for every $\k \in (0, 1)$,  as a consequence of the improved logconcavity inequality \eqref{f:AC}.
\smallskip

\item[--] In Proposition  \ref{p:viscosuper}    we prove that, provided  $\k \in (0, \overline \k (\Om)]$, $w _\k ^ { **}$ is a viscosity supersolution of equation \eqref{f:bvw} in $\Om$. 
\smallskip

\item[--]  In Proposition \ref{p:salani} we show that, for every $\k \in (0, \overline \k (\Om)]$, $u _\k$   belongs to $H ^ 1 _0 (\Om)$,
attains the same maximum value $1$ as $u$, and satisfies $-\Delta  u _\k   \leq  \lambda _ 1 (\Om )   u  _\k  $ a.e. in  $ \Om$.
 \end{itemize} 

\smallskip 

We then deduce Theorem  \ref{t:12log} by exploiting the simplicity of the first eigenfunction. 
\medskip 

\begin{proposition}\label{p:key}  Let $\k \in (0, 1)$ and assume that, at some point $x \in \Om$, one has $w _\k   ^ { **}  (x) < w _\k  (x) $. 
Let $x _i \in \Om$, $i = 1, \dots, m$  (with $m \geq 2$) be points where the infimum which defines   $w _\k   ^ { **}  (x)$ according to
\eqref{f:w**} is attained, and let $p_\k$ 
denote  the common value of  $\nabla w _\k$ at any of the points   $x_i$, as given by Lemma  \ref{l:ALLp1}  (i). 
Then 
 $$|p_\k | ^ 2 \geq  \frac{\pi ^ 2}{2 D _\Om ^ 2} \,.$$   
\end{proposition}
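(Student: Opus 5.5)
The plan is to combine the Andrews--Clutterbuck inequality \eqref{f:AC} for $v:=-\log u$ with the fact that $w_\k^{**}$ is affine, with slope $p_\k$, along the segment joining two of the optimal points $x_i$.

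\textbf{Setup.} Set $c:=-\log\k>0$, so that $w_\k=(v+c)^{1/2}$ and
\[
\nabla v = 2 w_\k \nabla w_\k \quad \text{in } \Om .
\]
Since $w_\k^{**}(x)<w_\k(x)$, the optimal points cannot all coincide; otherwise $x=x_i$ and $w_\k^{**}(x)=w_\k(x)$. I therefore pick two distinct optimal points, say $x_1\neq x_2$.

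\textbf{Step 1: a supporting affine function.} Since $w_\k^{**}$ is convex and finite in $\Om$, it has a subgradient $q$ at $x$; if $(p_\k,A)\in\subjet w_\k^{**}(x)$, one may take $q=p_\k$. Let $\ell(y):=w_\k^{**}(x)+\pscal{q}{y-x}$. Then $w_\k\geq w_\k^{**}\geq \ell$ in $\Om$. Moreover
\[
\sum_i t_i w_\k(x_i)=w_\k^{**}(x)=\ell(x)=\sum_i t_i\ell(x_i), \qquad t_i>0,
\]
which forces $w_\k(x_i)=\ell(x_i)$ for every $i$. Hence $w_\k-\ell$ attains its minimum at each $x_i$, so $\nabla w_\k(x_i)=q$. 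This identifies $q$ with the common value $p_\k$ from Lemma~\ref{l:ALLp1}(i). As a consequence,
\[
w_\k(x_1)-w_\k(x_2)=\pscal{p_\k}{x_1-x_2}.
\]

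\textbf{Step 2: apply \eqref{f:AC}.} Let $r:=|x_1-x_2|\in(0,D_\Om]$ and $e:=(x_1-x_2)/r$. From $\nabla v(x_i)=2w_\k(x_i)p_\k$ and Step 1,
\[
\pscal{\nabla v(x_1)-\nabla v(x_2)}{e}=2\big(w_\k(x_1)-w_\k(x_2)\big)\pscal{p_\k}{e}=2r\,\pscal{p_\k}{e}^2\leq 2r|p_\k|^2 .
\]
Inequality \eqref{f:AC} with $z=x_1$, $y=x_2$ bounds the left-hand side from below by $\frac{2\pi}{D_\Om}\tan\big(\frac{\pi r}{2D_\Om}\big)$. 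Since $\frac{\pi r}{2D_\Om}\in(0,\pi/2]$ and $\tan s\geq s$ there, this is at least $\frac{\pi^2 r}{D_\Om^2}$. Dividing by $2r>0$ gives
\[
|p_\k|^2\geq \frac{\pi^2}{2D_\Om^2}.
\]

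\textbf{Expected difficulty.} The only delicate point is Step 1, namely that the slope $p_\k$ really gives the increment $w_\k(x_1)-w_\k(x_2)$. This is handled by the touching argument above, which uses only convexity of $w_\k^{**}$, the optimality of the $x_i$, and differentiability of $w_\k$. The rest is the elementary bound $\tan s\geq s$. If the endpoint $r=D_\Om$ (argument $\pi/2$) is a concern, it is harmless: optimal points lie in the open set $\Om$, so $r<D_\Om$.
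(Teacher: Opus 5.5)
Your proof is correct and follows the paper's argument: apply \eqref{f:AC} to $w_\k^2=-\log u-\log\k$ at two distinct optimal points, use $w_\k(x_1)-w_\k(x_2)=\pscal{p_\k}{x_1-x_2}$, then conclude with Cauchy--Schwarz and $\tan s\geq s$. Your supporting-hyperplane argument in Step 1 justifies that identity more explicitly than the paper, which simply invokes the affinity of $w_\k^{**}$ on $[x_i,x_j]$. It also shows that the common gradient value exists without needing an element of $\subjet w_\k^{**}(x)$, and you correctly note that two distinct optimal points exist and that $|x_1-x_2|<D_\Om$.
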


\begin{proof} 
Since the function $w _\k ^ 2$ satisfies the improved log-concavity estimate \eqref{f:AC},  for every pair of distinct points $z, y$ in $\Om$, we have
  $$
 \langle  w _\k ( z) \nabla w _\k ( z) - w _\k (y) \nabla w _\k ( y), \frac{ z-y }{|z-y |}  \rangle 
\geq \frac{ \pi }{D_\Om} \tan \Big (   \frac{\pi  |z-y| }{2 D_\Om}  \Big ) \,.
$$
Pick  two distinct points $x_i$ and $x _j$ among those at which the infimum which defines   $w _\k   ^ { **}  (x)$ according to
\eqref{f:w**} is attained, and set 
 $p _\k = \nabla w _\k ( x_i) = \nabla w _k ( x_j)$.
  Applying the above estimate,  we obtain
 $$
 \langle   (w _\k  ( x_i )- w_\k   ( x_j) ) p  _\k , \frac{ x_i-x_j }{|x_i-x_j|}  \rangle 
\geq \frac{ \pi }{D_\Om} \tan \Big (   \frac{\pi  |x_i-x_j| }{2 D_\Om}  \Big )  \,.
$$
Since $w_\k ^{**}   $ is affine on the segment $[x_i, x_j]$, we have
  $$w_\k   ( x_i )- w_\k   ( x_j)  = \langle p_\k, x_i - x_j \rangle  \,.$$ 
By substituting this identity into the previous inequality, and then by applying Cauchy-Schwarz inequality 
 and the elementary inequality $\tan s \geq s$ for $s\in [0, \frac{\pi}{2}) $, 
we obtain
 \[
|p_\k| ^ 2 \geq  \frac{ \pi }{D_\Om} \frac{1}{ |x_i-x_j| }   \tan \Big (   \frac{\pi  |x_i-x_j| }{2 D_\Om}  \Big )   
\geq   \frac{\pi ^ 2}{2 D _\Om ^ 2} \,.
\qedhere 
\]
\end{proof}

\bigskip
 
\begin{proposition}\label{p:viscosuper}   
For any $\k \in (0, \overline k (\Om)]$, the convex envelope $w  _\k ^{**}$ is a viscosity  supersolution  to   equation \eqref{f:bvw}. 
  \end{proposition}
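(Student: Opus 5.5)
The plan is to check the viscosity inequality directly. Write
\[
g(s,p):=\frac1s\Big[(2s^2-1)|p|^2+\frac{\lambda_1(\Om)}2\Big].
\]
Fix $x\in\Om$ and $(p,A)\in\subjet w_\k^{**}(x)$; we must show $\Tr(A)\le g(w_\k^{**}(x),p)$. By Lemma \ref{l:bdr} the infimum defining $w_\k^{**}(x)$ is attained at points $x_1,\dots,x_m$ with weights $t_i>0$. We split into two cases according to whether $w_\k^{**}(x)=w_\k(x)$.

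\emph{Case $w_\k^{**}(x)=w_\k(x)$.} Let $\varphi$ be a test function for $(p,A)$. Since $w_\k^{**}\le w_\k$ with equality at $x$, the function $w_\k-\varphi$ also has a local minimum at $x$. Because $w_\k$ is $C^2$, this gives $p=\nabla w_\k(x)$ and $A\le\nabla^2 w_\k(x)$. Taking traces and using \eqref{f:bvw} yields $\Tr(A)\le\Delta w_\k(x)=g(w_\k(x),p)=g(w_\k^{**}(x),p)$.

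\emph{Case $w_\k^{**}(x)<w_\k(x)$.} Here $m\ge2$, and by Lemma \ref{l:ALLp1}(i) we have $\nabla w_\k(x_i)=p$ for all $i$. Set $s_i:=w_\k(x_i)$, so that $w_\k^{**}(x)=\sum_i t_is_i$ and each $s_i\ge s_0:=(-\log\k)^{1/2}$. If some $A_i$ is degenerate, Lemma \ref{l:ALLp1}(iv) gives $\Tr(A)\le 0$, and it then suffices to know that $g(\cdot,p)>0$ on $[s_0,\infty)$; I address this positivity below. Otherwise Lemma \ref{l:ALLp1}(iii) applies. For a positive definite $A_i$ one has $\Tr(A_i^{-1})\ge 1/\lambda_{\max}(A_i)\ge 1/\Tr(A_i)$, and $\Tr(A_i)=\Delta w_\k(x_i)=g(s_i,p)$. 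Hence
\[
\Tr(A)\le\Big(\sum_i \frac{t_i}{g(s_i,p)}\Big)^{-1}.
\]
It therefore suffices to prove
\[
\sum_i t_i\,h(s_i)\ge h\Big(\sum_i t_is_i\Big),\qquad h(s):=\frac1{g(s,p)}=\frac{s}{as^2+c},\quad a=2|p|^2,\ c=\frac{\lambda_1(\Om)}2-|p|^2,
\]
which is Jensen's inequality as soon as $h$ is convex on $[s_0,\infty)$. A direct computation gives
\[
h''(s)=\frac{2as\,(as^2-3c)}{(as^2+c)^3},
\]
so convexity reduces to $2|p|^2s^2\ge 3\big(\tfrac{\lambda_1(\Om)}2-|p|^2\big)$, that is,
\[
|p|^2(2s^2+3)\ge \tfrac32\lambda_1(\Om)\quad\text{for } s\ge s_0 .
\]
This is where both ingredients enter. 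Proposition \ref{p:key} gives $|p|^2\ge \pi^2/(2D_\Om^2)$. The choice $\k\le\overline\k(\Om)$ in \eqref{f:kappa} gives $2s^2+3\ge 3\lambda_1(\Om)D_\Om^2/\pi^2$. Multiplying these two bounds gives exactly $\tfrac32\lambda_1(\Om)$, so the threshold is sharp for this argument.

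The remaining point, and the one I expect to need the most care, is positivity of the denominator $as^2+c$, which is needed both for $h$ to be well defined and for the degenerate case. If $c>0$ this is immediate. If $c\le0$, then
\[
as^2+c=|p|^2(2s^2-1)+\tfrac{\lambda_1(\Om)}2,
\]
which is positive once $s_0^2>1/2$. For $n\ge2$ this follows from Remark \ref{r:kappino}: the bound there gives $-\log\k\ge\frac32\big(4j_{\nu,1}^2/\pi^2-1\big)>1/2$. The case $n=1$ should be checked separately, or handled directly, since there the eigenfunction is explicit. Finally, the Jensen step is legitimate because all $s_i$ and their convex combination lie in $[s_0,\infty)$, where $h$ is convex.
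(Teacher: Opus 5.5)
Your proof is correct for $n\ge 2$ and follows the paper's argument. It uses the same reduction via Lemma \ref{l:ALLp1}(iii); your bound $\Tr(A_i^{-1})\ge 1/\Tr(A_i)$ makes explicit a step the paper uses tacitly. It uses the same convexity computation for $s\mapsto s/(as^2+c)$, which is the paper's $\varphi_a$ up to the factor $2|p|^2$. And it uses Proposition \ref{p:key} together with $\k\le\overline\k(\Om)$ to get convexity when $c>0$, exactly as the paper does.

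You diverge in one place: positivity of $as^2+c$ and the degenerate case. You rely on the uniform bound $s_0^2>1/2$ taken from Remark \ref{r:kappino}. This is exactly what leaves $n=1$ open: for every interval $\lambda_1(\Om)D_\Om^2=\pi^2$, so $\overline\k(\Om)=1$ and $s_0$ can be arbitrarily small.

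The paper needs no such bound, because it uses the sign information at the contact points:
\begin{itemize}
\item In the nondegenerate case, $g(s_i,p)=\Tr(A_i)>0$ for every $i$. The set $\{s>0:\ as^2+c>0\}$ is a half-line, so it also contains $w_\k^{**}(x)=\sum_i t_is_i$.
\item In the degenerate case with $c<0$, the map $s\mapsto g(s,p)=as+c/s$ is concave on $(0,+\infty)$. Hence $g(w_\k^{**}(x),p)\ge\sum_i t_i\,g(s_i,p)=\sum_i t_i\Tr(A_i)\ge 0$ by Lemma \ref{l:ALLp1}(ii).
\item For $c\ge 0$, positivity is trivial.
\end{itemize}
Adopting this removes your appeal to Faber--Krahn and the isodiametric inequality. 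It makes the argument dimension-free, and you no longer need the separate explicit computation in dimension one that you left unperformed.
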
 

\begin{proof}
Let $\k \in (0, \overline k (\Om)]$ be fixed. We have to show that, setting  
$$F (s , \xi, X):= - {\rm {\rm Tr} } ( X)  + \frac{1}{s} \big [ ( 2 s ^ 2 -1 ) |\xi| ^ 2 +\frac{\lambda _ 1 (\Om) }{2} \big ] \,,$$ 
it holds that
\begin{equation}\label{f:goal1}
F ( w _\k ^{**}(x), p, A ) \geq 0 \qquad \forall x \in \Om \,, \quad \forall (p, A) \in \subjet w _\k  ^ { ** } ( x) \,.
\end{equation}

Let $x \in \Om$ be fixed. 
We first observe that \eqref{f:goal1} holds trivially  if $w _\k ^{**}(x) = w _\k ( x)$. Indeed in this case we have  
$ \subjet w _\k  ^ { ** } ( x) \subseteq \subjet w _\k   ( x)$, so that 
$(p, A) \in \subjet w _\k   ( x)$, and thus \eqref{f:goal1} holds because $w _\k$ is a classical solution, hence a viscosity solution,  to equation \eqref{f:bvw}. 
Observe that this case happens, in particular, when $p=0$, because in this case $x$ must coincide with the unique minimum point of $w_\k$,
i.e., the unique maximum point of $u$.

Thus, in the remaining of the proof we are going to assume that $w _\k ^{**}(x) < w _\k ( x)$ and $p\neq 0$.

By Lemma \ref{l:bdr}, there exist $t ^\k  _i \in (0, 1)$ and  $x ^\k _i \in \Om$, $i = 1, \dots, m_\k$ (with $m _\k \geq 2$), 
such that
\begin{equation}\label{f:env}
\sum _{i=1}^{m _\k} t ^\k  _i =1\,, \qquad
x = \sum _{i=1}
 ^{m _\k} t ^\k  _i x ^\k  _i\,, \qquad w^{**} _\k  (x)  = \sum _ { i = 1} ^{ m_\k} t ^\k   _i w _\k  ( x ^\k _i) \,.
\end{equation} 
 
Throughout the proof, the parameter
$\k \in (0, \overline \k (\Om)]$, the point $x \in \Om$, and accordingly the
numbers $t^\k _i $ and  the points $x^\k _i $ in \eqref{f:env}  will remain fixed. 
Taking also into account that,  by Lemma \ref{l:ALLp1} (ii), the vector  $\nabla  w  _\k ( x_i)$  is  independent of  $i \in \{ 1, \dots, m _\k\}$,
we are going to write for brevity  
$$w ^{**} := w^{**} _\k  (x)$$
and, for every $i = 1, \dots, m: = m _\k$ (with $m \geq 2$), 
$$t_i:= t ^ \k _ i \, , \qquad w^i:= w  _\k ( x_i),    \qquad p:= \nabla  w  _\k ( x_i)\ , \qquad A _{ i}: = \nabla ^ 2 w _\k  ( x_i) \,.$$ 
Moreover, we set 
 \begin{equation}\label{f:defa} 
 a:=  \frac{\lambda _1 (\Om)}{4| p | ^ 2 } - \frac 1 2 \,,
 \end{equation}
so that
$$\begin{aligned}
& F (w^{**}, p, A )  = - \Tr (A)+  2 |p| ^ 2 \frac{(w^{**} ) ^ 2 + a}{w ^ {**}}\,,
\\ 
& F (w^i, p, A_i )  =- \Tr (A_i) + 2 |p| ^ 2 \frac{(w^{i} ) ^ 2 + a}{w ^ {i}} \,.  
 \end{aligned}$$

By Lemma \ref{l:ALLp1} (ii), we know that all the matrices $A_i$ are positive semi-definite.  
  Let us first prove the inequality \eqref{f:goal1} when  all of them are 
positive definite. 
In this case,  by Lemma \ref{l:ALLp1}  (iii),  it holds that
 $$\Tr(A) \leq 
\left(\sum_{i=1}^m t_i \Tr (A_i^{-1})\right)^{-1}\,. $$ 
 Hence,  $$\begin{aligned} 
& F (w^{**}, p, A )  
 =  - {\rm Tr} ( A)  +  2 |p| ^ 2 \frac{(w^{**} ) ^ 2 + a}{w ^ {**}}   \\ 
 & \geq   - \Big \{  \sum _ { i = 1} ^ m  t _i  \big [  {\rm Tr} (A _i )   \big ]   ^ { -1}   \Big \} ^ { -1}  
 + 2 |p| ^ 2 \frac{(w^{**} ) ^ 2 + a}{w ^ {**}}    \\ 
 & =   - \Big \{  \sum _ { i = 1} ^ m  t _i  
 \Big [  2 |p| ^ 2 \frac{(w^{i} ) ^ 2 + a}{w ^ {i}}   \Big ]      ^ { -1}   \Big \} ^ { -1}   + 2 |p| ^ 2 \frac{(w^{**} ) ^ 2 + a}{w ^ {**}}     \,,
   \end{aligned} 
 $$ 
 where in the last equality we have used the identity
  \begin{equation}\label{f:equazz} 
   F (w^i, p, A_i ) = 0 \qquad \forall i = 1, \dots, m, 
 \end{equation}
  holding since $w _\k $ satisfies the pde  \eqref{f:bvw}
 at the points $x _i$'s.

 We infer that the target inequality \eqref{f:goal1} is fulfilled provided
 \begin{equation}\label{f:goal2} \frac{w^{**}}{(w^{**}) ^ 2  + a } 
 \leq \sum _ { i = 1} ^ m  t _i  
 \frac{w^i} { (w^{i} ) ^ 2  + a   }  \,.
 \end{equation} 
We now observe that, since we are assuming that the matrices $A_i$ are positive definite, and since
the equalities \eqref{f:equazz}  hold, we have 

 $$0 < {\rm Tr} ( A_i) = 2 |p | ^ 2 \, \frac {(w^ {i} ) ^ 2  + a  }{w^ i}   \qquad \forall i = 1, \dots, m.$$ 
 We deduce that 
 \begin{equation}\label{f:app} 
 w ^ i \in \mathcal R:=  \big \{  s \in (0,   + \infty)  \text{ such that } s ^ 2 > - a  \big \} \qquad \forall i = 1, \dots, m \,.
 \end{equation} Hence, we see that
the inequality \eqref{f:goal2} is satisfied provided  the values $\{w^ 1, \dots, w ^m, w ^{**}\}$ fall in the subregion $\mathcal R_0$ of $\mathcal R$ where the function 
 $$\varphi _ a ( s):= \frac{s}{ s ^ 2 + a  }\,,$$ 
 
is convex. 
 Since 
 $$\varphi _ a''  ( s):= \frac{2s (s^2- 3a)}{ (s ^ 2 + a ) ^ 3  } \,,$$ 
 we have
 $${\mathcal R}_0=  \mathcal R \cap \{ s^2 \geq 3 a \}  \,.$$ 
 So we distinguish two cases:
 
 \smallskip
 {\it Case 1}:  $a \leq 0$. In this case we have 
 $${\mathcal R}_0=  \mathcal R = \{ s > \sqrt { - a \}}\,. $$ 
 By \eqref{f:app}, we have that, for every $i = 1, \dots, m$,  the values $w ^ i$ lie in $\mathcal R _0$; by the convexity of $\mathcal R _0$, also  $w ^{**}$ lie in $\mathcal R _0$. Thus
 the required inequality \eqref{f:goal2} is satisfied, and our proof is achieved.
 
 \smallskip 
  {\it Case 2}:  $a >0$. In this case,  imposing that $w ^ i \in \mathcal R _0$ amounts to ask that 
\begin{equation}\label{f:stimap}
|p| ^2\geq \frac{\lambda _ 1 (\Om) }{4} \frac{1}{\frac{ (w ^ i) ^ 2}{3} + \frac {1}{2}  } \qquad \forall i = 1, \dots, m \,.
\end{equation}
We claim that these inequalities are fulfilled thanks to the assumption that the constant $\kappa$ in \eqref{f:norm} does not exceed $\overline \kappa (\Om)$. 
Indeed, since $m \geq 2$, we are in a position to apply Proposition \ref{p:key}: it implies that
a sufficient condition for the validity of the inequalities in  \eqref{f:stimap} is 
 $$ \frac{\pi ^ 2}{2 D _\Om ^ 2} \geq   \frac{\lambda _ 1 (\Om) }{4} \frac{1}{\frac{ (w ^ i ) ^ 2}{3} + \frac {1}{2}  } \qquad \forall i = 1, \dots, m \,, $$ 
 or equivalently 
 $$( w^i ) ^ 2 =  - \log (\k u(x_i)) )   \geq  \frac 3 2 \Big ( \frac{ \lambda _ 1 (\Om) D _\Om^2  }{\pi ^ 2}  -1 \Big )  \qquad \forall i = 1, \dots, m   \,.$$ 
Such inequalities are satisfied  because  $0 < u (x_i) \leq 1$ and  $\kappa  \leq \overline \kappa (\Omega)$.  

 \smallskip
It remains to prove that  the inequality 
\eqref{f:goal1}  is satisfied also when some of the $A_i$ is degenerate. In this case, by  Lemma \ref{l:ALLp1} (iv), we know that ${\rm Tr} (A) \leq 0$. Hence,  
 $$
F (w^{**}, p, A ) 
 =  - {\rm Tr} ( A)  +  2|p|^ 2 \frac{( w^ {**} ) ^2 +a   }{w^{**}  }  \geq  2|p|^ 2 \frac{( w^ {**} ) ^2 +a   }{w^{**}  } \,.
$$
To conclude we observe that: if $a \geq 0$, we have immediately   $
 F (w^{**}, p, A ) \geq 0$; on the other hand, if $a <0$, we have that 
 the map $s \mapsto \frac{ s ^ 2 + a}{s} = s - \frac{|a|}{s}$ is concave in $(0, + \infty)$, and hence
 \[
 F (w^{**}, p, A )\geq  2|p|^ 2 \frac{( w^ {**} ) ^2 +a   }{w^{**}  }  \geq 2|p|^ 2  \sum _ {i= 1} ^m t_i \frac{ (w^ i )  ^ 2 + a}{w^ i}  =    \sum _ {i= 1} ^ m  t_i \Tr (A_i)  \geq 0 \,.
 \qedhere
\]
\end{proof}

\bigskip

\begin{proposition}\label{p:salani}   Let $\k \in (0, \overline \k (\Om)]$, and let $ u_\k$ be defined by  \eqref{f:normu}. 
Then: 
\begin{itemize}
\item[(i)]  $u_\k$ is a viscosity subsolution to $-\Delta u = \lambda _ 1 (\Om ) u$ in $\Om$; 
\smallskip 

\item[(ii)] it holds that $-\Delta   u_\k  (x) \leq  \lambda _ 1 (\Om )   u _\k (x) $ for a.e. $x \in \Om$;
\smallskip 

\item[(iii)]    we have that $\max _{\overline \Om } u _ \k   = 1$,  and $ u_\k  \in H ^1 _0 (\Om)$.
\end{itemize} 
\end{proposition}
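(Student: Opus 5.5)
For (i) I will transfer the viscosity supersolution property of $w_\k^{**}$ from Proposition \ref{p:viscosuper} through the change of variables $u_\k = \Phi(w_\k^{**})$, with $\Phi(s) := \k^{-1} e^{-s^2}$. This $\Phi$ is smooth and strictly decreasing on $(0,+\infty)$, and $w_\k^{**} \geq \min w_\k = (-\log\k)^{1/2} > 0$.

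Let $\psi$ be smooth with $u_\k - \psi$ attaining a local maximum at $x$, so that $\psi$ touches $u_\k$ from above there. Near $x$ we have $\psi > 0$ and $\psi \leq \k^{-1}$ locally, so the function $\varphi := \Phi^{-1}(\psi) = (-\log(\k\psi))^{1/2}$ is well defined and smooth. Its only possible degeneracy is where $\k\psi = 1$; since $\k < 1$ and $u_\k \leq 1$, this does not occur near points where $u_\k(x) = \psi(x) < \k^{-1}$. Because $\Phi^{-1}$ is decreasing, $w_\k^{**} - \varphi$ has a local minimum at $x$. The viscosity supersolution inequality for \eqref{f:bvw} then gives $F(\varphi(x), \nabla\varphi(x), \nabla^2\varphi(x)) \geq 0$. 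Reversing the computation that produced \eqref{f:bvw} from $-\Delta u = \lambda_1 u$ (a pointwise identity valid for any smooth positive function) turns this into $-\Delta\psi(x) \leq \lambda_1(\Om)\psi(x)$. This is exactly the subsolution inequality.

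For (ii), $w_\k^{**}$ is convex, hence locally Lipschitz and, by Alexandrov's theorem, twice differentiable a.e. Consequently $u_\k$ is locally Lipschitz and twice differentiable a.e. At a point of twice differentiability, the second-order Taylor polynomial plus $\varepsilon|y-x|^2$ is a smooth test function touching from above, so (i) yields $-\Delta u_\k \leq \lambda_1 u_\k + 2n\varepsilon$; letting $\varepsilon \to 0$ gives the pointwise inequality. I expect this step to be routine.

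For (iii), the maximum is handled as follows. We have $w_\k^{**} \leq w_\k$, and both attain the same minimum $(-\log\k)^{1/2}$, because the constant $\min w_\k$ is a convex minorant and $w_\k$ takes that value at the maximum point of $u$. Hence $u_\k \geq u$ and $\max u_\k = 1$.

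For the boundary behaviour, Lemma \ref{l:bdr} gives $w_\k^{**} \to +\infty$ at $\partial\Om$, so $u_\k \to 0$ there. It remains to prove $\nabla u_\k \in L^2(\Om)$, and I expect this to be the main obstacle. I would try to obtain it by bounding $\nabla u_\k = -2 w_\k^{**} \nabla w_\k^{**}\, u_\k$ through a comparison with $u$. First, $w_\k^{**}$ is convex with minimum point $x_0$, the maximum point of $u$. For a convex function the slope along rays from $x_0$ controls the gradient: given $x$ near the boundary, extending the segment from $x_0$ through $x$ to the boundary shows that $|\nabla w_\k^{**}(x)|$ is governed by values at boundary-near points, where it is finite only in terms of $w_\k$ itself.

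A cleaner route may be the following. Since $u_\k$ is a nonnegative function that is continuous on $\overline\Om$ and vanishes on $\partial\Om$, test the a.e. inequality $-\Delta u_\k \leq \lambda_1 u_\k$ against truncations $(u_\k - \varepsilon)^+$, which have compact support. Because $u_\k$ is semiconcave-like ($\log(\k u_\k) = -(w_\k^{**})^2$ is a concave function composed in a smooth way), the distributional Laplacian has a nonpositive singular part, which justifies integrating by parts. This yields $\int |\nabla (u_\k-\varepsilon)^+|^2 \leq \lambda_1 \int u_\k^2 \leq \lambda_1 |\Om|\,\k^{-2}$, uniformly in $\varepsilon$. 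Letting $\varepsilon \to 0$ then gives $u_\k \in H^1_0(\Om)$. The delicate point is verifying the sign of the singular part of $\Delta u_\k$. This follows because $-(w_\k^{**})^2$ is concave, so $D^2(-(w_\k^{**})^2) \leq 0$ as measures, and $u_\k = \k^{-1}\exp(-(w_\k^{**})^2)$ with $\exp$ increasing and convex preserves the nonpositivity of the singular part.
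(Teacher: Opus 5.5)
Your parts (i) and (ii) are correct and follow the paper; your treatment of (ii) via the Taylor polynomial plus $\varepsilon|y-x|^2$ is slightly more careful than the paper's. So are the identity $\max_{\overline\Om}u_\k=1$ and the fact that $u_\k\to0$ at $\partial\Om$.

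The gap is in proving $u_\k\in H^1_0(\Om)$. The ``rays'' idea is never carried out. The truncation argument gets the sign of the singular part backwards. Write $\Delta u_\k=(\Delta^{\rm ac}u_\k)\,dx+\mu^s$ and $\phi=(u_\k-\varepsilon)^+\ge0$. If $\mu^s\le0$, then
\[
\int_\Om|\nabla\phi|^2=-\int_\Om\phi\,\Delta^{\rm ac}u_\k\,dx-\int_\Om\phi\,d\mu^s\le\lambda_1(\Om)\int_\Om u_\k\,\phi\,dx+\int_\Om\phi\,d|\mu^s| .
\]
The nonpositive singular part that semiconcavity of $u_\k$ gives you therefore adds a nonnegative, uncontrolled term on the wrong side. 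To conclude you would need $\mu^s\ge0$.

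The tent function $1-|x|$ on $(-1,1)$ shows the implication genuinely fails. It is log-concave, satisfies $-u''\le\frac{\pi^2}{4}u$ a.e., and has $\mu^s=-2\delta_0\le0$. Yet $\int|u'|^2=2>\frac{\pi^2}{4}\int u^2=\frac{\pi^2}{6}$.

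To save your route you must show that $\mu^s$ vanishes, i.e.\ that $-\Delta u_\k\le\lambda_1(\Om)u_\k$ holds in $\mathcal D'(\Om)$ and not only a.e. Two ways to get this:
\begin{itemize}
\item invoke Ishii's theorem that viscosity subsolutions of linear equations are distributional subsolutions;
\item prove $w_\k^{**}\in C^{1,1}_{\rm loc}(\Om)$. It is convex, and it inherits local semiconcavity from $w_\k$ through the optimal decompositions of Lemma \ref{l:bdr}, whose points stay in a compact set as long as $x$ does.
\end{itemize}
Some such extra ingredient is indispensable, and your proposal has none.

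The paper avoids this entirely by proving the global bound $|\nabla u_\k|\le\sqrt{\lambda_1(\Om)}$. Together with $u_\k\to0$ at $\partial\Om$, this gives $u_\k\in H^1_0(\Om)$ at once. The argument runs as follows.
\begin{itemize}
\item At a point $x$ where $w_\k^{**}$ is differentiable, Lemma \ref{l:ALLp1}(i) provides an optimal point $y_x$ with $\nabla w_\k^{**}(x)=\nabla w_\k(y_x)$ and $w_\k(y_x)\le w_\k^{**}(x)$.
\item One has $|\nabla u_\k|=2\Phi_\k(w_\k^{**})|\nabla w_\k^{**}|$ and $|\nabla u|=2\Phi_\k(w_\k)|\nabla w_\k|$, where $\Phi_\k(s)=\k^{-1}se^{-s^2}$.
\item $\Phi_\k$ is decreasing on $[1/\sqrt2,+\infty)\supseteq[\sqrt{-\log\k},+\infty)$ because $\k\le e^{-1/2}$, which the paper derives from $\k\le\overline\k(\Om)$ and \eqref{f:kappamax}.
\item Hence $|\nabla u_\k(x)|\le|\nabla u(y_x)|\le\sqrt{\lambda_1(\Om)}$ by Lemma \ref{l:yau}.
\end{itemize}
This pointwise transfer of gradients through the optimal points, combined with the Li--Yau-type bound, is the idea your proposal is missing.
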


\proof   (i) By definition of viscosity subsolution, we have to show that, for any $x \in \Om$, if $\varphi$ is a smooth function  
which touches $ u_\k$ from from above  at $x$ (i.e.,  such that $ u_\k(x) = \varphi(x)$ and  $ u_\k (y) \leq \varphi (y)$ for every $y\in\Omega$),
it holds that
\begin{equation}\label{f:conditionsub} - \Delta \varphi (x)  \leq \lambda _ 1 (\Om) \varphi  (x) \,.
\end{equation}
From the second equality in \eqref{f:normu}, we see that the function $\psi :=(- \log (\k  \varphi) ) ^ {1/2}  $ touches $w _\k   ^{**} $ from below at  $x$ 
  (i.e.,   $w_\k  ^ {**} (x) = \psi(x)$ and  $ w _\k  ^ {**}  (y)  \geq   \psi (y)$ for every $y\in\Omega$). On the other hand, 
  it is straightforward to check that that the inequality  \eqref{f:conditionsub} is equivalent to 
  \begin{equation}\label{f:conditionsuper}  
    - \Delta \psi  (x) + \frac{1}{\psi (x) } \big [ ( 2 \psi  (x) ^ 2 -1 ) |\nabla \psi (x)| ^ 2 +\frac{\lambda _ 1 (\Om) }{2} \big ] \geq 0  \,,\end{equation} 
The above inequality holds true because,  by Proposition \ref{p:viscosuper}, $w _ \k ^{**}$ is a viscosity  supersolution  to  equation  \eqref{f:bvw}.

\smallskip
(ii) By statement  (i) and the definition of viscosity subsolution, it follows that the inequality   $-\Delta   u_\k  (x) \leq  \lambda _ 1 (\Om )   u_\k (x) $  is satisfied at every point where $ u_\k$ is twice differentiable. Then statement (ii) follows taking into account that, by Alexandrov Theorem, the function $ w _ \k   ^ {**}$ is twice differentiable almost everywhere in $\Om$, and hence the same holds true for $u_\k = \exp (  - \log \k -  ( w _\k ^ {**} ) ^ 2  )$. 

\smallskip
(iii) By definition of convex envelope, we have  that  
$$w _ \k(x) \geq w  _\k ^ { **} (x) \geq \min _{\overline \Om}  w _\k   \qquad \forall x \in \Om \,,$$
which ensures in particular that 
\begin{equation}\label{f:minconv} \min _{\overline \Om}  w  _\k ^ { **}   = 
 \min _{\overline \Om}   (w _\k )\
  (=  \sqrt{  - \log  \k})\,.
  \end{equation}
 Then, from its definition, the function $u_\k$  satisfies 
\begin{equation} \label{f:b1} 
\max _{\overline \Om}  u _ \k   = 1  \,.\end{equation}
Let us show  that $ u_\k$ is Lipschitz continuous in $\Omega$, with 
\begin{equation} \label{f:b2} |\nabla  u_\k ( x)  | \leq \sqrt {\lambda _ 1 (\Om)} \qquad \forall x \in \Om\,.\end{equation}

From the definition of $u_\k$, we see that 
\begin{equation}\label{f:nabla} 
|\nabla  u _\k | = 2 w _\k ^ {**} \exp (  - \log \k -  ( w _\k ^ {**} ) ^ 2  ) |  \nabla w _\k ^ {**}  |  = 2 \Phi  _\k ( w _\k ^ {**} ) |  \nabla w _\k ^ {**}  | \,,
\end{equation}   
where we have set $\Phi _\k ( s) := s \exp ( - \log \k -  s^ 2  ) $; in a similar way, we have that
 \begin{equation}\label{f:nabla2} 
|\nabla  u | = 2 w _\k  \exp (  - \log \k -  ( w _\k  ) ^ 2  ) |  \nabla w _\k  |  = 2 \Phi  _\k ( w _\k  ) |  \nabla w _\k  | \,.
\end{equation}    

We have $\Phi_\k  '( s) = (1-2 s ^2)  \exp ( - \log \k -  s^ 2  ) $, so that $\Phi_\k$ is monotone decreasing on the half-line 
 $[\frac{ 1}{\sqrt 2} , + \infty)$.  Notice that, thanks to the choice $\k \leq \overline \k ( \Om)$, and to the estimate 
 \eqref{f:kappamax}, we have 
$\k \leq \frac {1}{\sqrt e}$,  so that the half-line 
 $[\sqrt{ - \log \k} , + \infty)$  is contained into
 $[\frac{ 1}{\sqrt 2} , + \infty)$.

 Now we observe that the gradient of the convex envelope enjoys the following property:   
 for every $x \in \Om$, there exists 
 $y _x \in \Om$ such that $w _ \k ( y_x) \leq w _\k  ^ {**} (x)$ and 
$\nabla w _\k ^  {**} (x) = \nabla w _ \k ( y _x)$.   This follows immediately  from Lemma \ref{l:ALLp1}: it is enough  observe that, in the family of points $x_i$ where the infimum defining  $w _\k ^  {**} (x)$ according to \eqref{f:w**}   is attained, 
there exist at least one of them such that  $w _ \k ( x_i) \leq w _\k  ^ {**} (x)$, and to choose $y _ x = x_i$ 
(notice that it may occur that 
$y _x =x$ itself, in case $w _\k  ^ {**} (x) = w _\k  (x)$).

Then, by comparing \eqref{f:nabla} written at $x$ and \eqref{f:nabla2} written at $y _x$,   we have 
$$    |\nabla u _\k   (x)   |        = \frac{\Phi  _\k ( w _\k ^ { **}  (x)  )  } {\Phi _\k ( w _\k (y_x)  ) }  \,  |\nabla u   ( y_ x) |  \leq |\nabla u   ( y_ x) | \leq \sqrt {\lambda _ 1 (\Om)}\,,$$
where the first inequality holds because $w _ \k ( y_x) \leq w _\k  ^ {**} (x)$ and  $\Phi _\k$ is monotone decreasing on the half-line  $[\sqrt{ - \log \k} , + \infty)$, while   the second inequality follows from Lemma \ref{l:yau}.  

We have thus proved  the Lipschitz estimate \eqref{f:b2}, which combined with \eqref{f:b1} ensures in particular that $u _\k \in H ^ 1 (\Om)$. Finally, the fact that $u_\k \in H ^ 1 _0 (\Om)$  follows by noticing that, as $x \to \partial \Om$, we have 
 $w _\k  (x) \to + \infty$, which implies by Lemma \ref{l:bdr} that 
also 
$w _\k ^ { **} (x) \to + \infty$.  
   \qed

 \bigskip
\bigskip\bigskip

{\bf Proof of Theorem \ref{t:12log}}.   Let $ u_\k$ be defined by  \eqref{f:normu}.   Multiplying by $ u_\k $ the inequality holding 
by Proposition \ref{p:salani} (ii), we obtain 
$$  u _\k ( x) \Delta   u_\k  (x) \leq  \lambda _ 1 (\Om )  u _\k  ^2 (x)  \qquad \text{ a.e. in }  \Om\,.$$  
We now integrate over $\Omega$. Taking into account that  $ u_\k$ belongs to $H ^1 _0 (\Om)$ (cf.\ Proposition \ref{p:salani} (iii)), we obtain  
$$\int _\Om |\nabla  u _\k | ^ 2 \leq \lambda _ 1 (\Om) \int _\Om | u_\k | ^ 2 \,.$$
From the variational characterization of $\lambda _ 1 (\Om)$ as the infimum of the Rayleigh quotient over $H ^ 1 _0 (\Om)$, and from the simplicity of the first eigenfunction, we deduce that $u_\k$ is a multiple of $u$, i.e., there exists some $ t \in \R$ such that 
$$ u_\k (x) = t u  (x)   \qquad \text{ in } \Om\,.$$
Finally we notice that, by Proposition \ref{p:salani} (iii), it holds  
$\max _{\overline \Om}  u _ \k   = 1$. Since the same condition holds true by assumption for $u$, 
we have  necessarily $t= 1$, which yields  $ u _\k = u$, and, in turn, $w _\k = w _\k ^ { **}$. 
\qed 

\section{Proof of Theorem \ref{t:level}} \label{sec:proof2}

We keep the notation fixed at the beginning of the previous section: for any $\k \in (0, 1)$,  we consider the function $w _\k$ given by 
\eqref{f:norm}. Then we define 
$\overline w _ \k$ as the unique number in  $( \sqrt{ - \log \k}   , + \infty)$  
such the function 
$$\Psi _\k (s ):= \frac {\k ^ 2 e ^ { 2 s ^ 2  } -1} {2 s ^ 2 } $$ 
satisfies
$$\Psi _ \k (   \overline  w _\k ) = \frac{\pi ^ 2}{\lambda _ 1 (\Om) D _\Om ^ 2 } \,.$$  
Notice that $\overline w _ \k$ is well-defined since $\Psi _\k$ is nonnegative and strictly monotone increasing
on the half-line $( \sqrt{ - \log \k  }   , + \infty)$, with 
$$\Psi _k  \big ( \sqrt{ - \log \k  } \big ) = 0 \qquad \text{ and } \qquad \lim _{s \to + \infty} \Psi _\k (s) = + \infty\,,$$ 
see Figure \ref{fig}.  
Then, since  
$$\min _{\Om}   w _\k 
  =  \sqrt{  - \log  \k} \qquad \text{ and } \qquad   w_\k (x) \to +\infty \text {  as  } \dist(x, \partial\Omega) \to 0\,,$$
the subset $\Om _\k$ of $\Om$ defined by  
$$\Om _\k := \big \{ x \in \Om \ :\ w _\k ( x) <  \overline  w _\k     \big \}  $$ 
is nonempty.  Moreover,   since
$$\Om _\k  = \Big \{ x \in \Om \ :\ u  ( x) >  \overline u _\k := \frac{1}{\k} \exp (-  \overline w _\k ^2 )  \Big \} \,,$$ 
by  the log-concavity of $u$ we have that $\Om _k$ is a convex set. 

Notice also that, since  the value $\frac{\pi ^ 2}{\lambda _ 1 (\Om) D _\Om ^ 2 } $ belongs to $(0, 1)$ (cf. Remark \ref{r:kappino}), and since
\[
\lim _{\k \to 1 ^-} \Psi _\k (s )= \Psi _1 ( s)= \frac { e ^ { 2 s ^ 2  } -1} {2 s ^ 2 } \,,
\qquad\forall s>0,
\] 
 we have that 
 $\lim _{\k \to 1 ^-} \overline w _\k = 0$. Consequently,  
 $\lim _{\k \to 1 ^-} \overline u _\k = 1$, 
 namely the convex sets $\Om _\k$ shrink to
a singleton (the maximum point of $u$)  
as $\k \nearrow 1$. 

 \begin{figure} 
 \center
 \includegraphics[width=8cm]{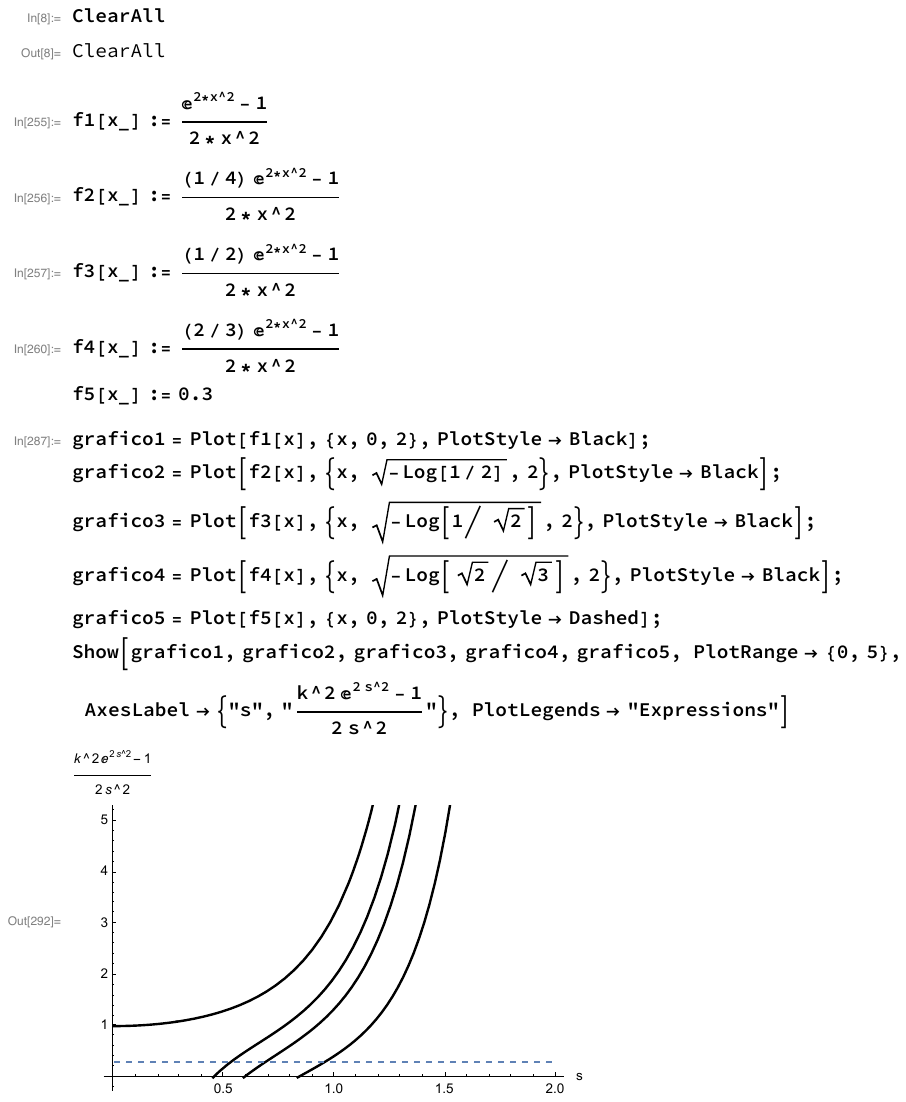}
 \caption{ Plot of the maps $s \mapsto \Psi _\k ( s)$ (from the right to the left, for $\k= \frac{1}{2},  {\frac{1}{ \sqrt  2}}, {\frac{  \sqrt 2}{  \sqrt 3}}, 1$),  and in dashed line of the constant map $s \mapsto  \frac{\pi ^ 2}{\lambda _ 1 (\Om) D _\Om ^ 2 } $.}
 \label{fig}   
 \end{figure} 

Let us prove that, denoting by $(w _\k )  ^{**}$ the convex envelope of $w _\k$ in $\Om$, it holds that
\begin{equation}\label{f:invomega}
w _\k   =  (w _\k )  ^{**}  \qquad \text{ in } \ \Om _\k \,.
\end{equation}  
Such equality ensures in particular that
the function $w _\k$  is convex on $\Om _k$.

Assume by contradiction that \eqref{f:invomega} is false at some point $x \in \Om _\k$. Then we can find numbers  $t _ i \geq 0$ and points
 $x_i \in \Om$, $i = 1, \dots, m$, such that $\sum _{i = 1} ^ m t _ i = 1$, $\sum _{i = 1} ^ m t _ i x_i = x$, and 
  $$\sum _ { i = 1} ^ m t _i w _\k ( x_i) < w _\k ( x)\,.$$ 
  We observe that, for at least one index $i _0 \in \{1, \dots, m\}$, we have  $x_{i_0}  \in \Om _\k$. 
  In fact, otherwise it would be 
  $$w _\k ( x_i ) \geq \overline w _ \k  \qquad \forall i = 1, \dots, m \,,$$ 
  which in turn would give 
  $$(w _\k   ) ^ { **} ( x) =   \sum _ { i = 1} ^ m t _i w _\k ( x_i)  \geq \overline w _ \k  \,,$$
  against the fact that $x \in \Om _ \k$. 
  
  We apply the inequality given by Lemma \ref{l:yau} at the point $x_{i_0}$.  It gives: 
  
 $$|\nabla (\k  u ) (x_{i_0} )| ^ 2 + \lambda _ 1 (\Om)  | (\k u  ) (x _{i_0} )| ^ 2 \leq \k ^ 2 \lambda _ 1  (\Om) \,,$$  
  or equivalently, in terms of the function $w _\k$: 
  $$4 w _\k ^2  (x_{i_0} )  |\nabla w _\k (x_{i_0} )| ^ 2 e ^ { - 2 w _\k  ^2 (x_{i_0} ) } + \lambda _ 1 (\Om) e ^ { - 2 w _\k  ^2 (x_{i_0} ) } \leq \k ^ 2 \lambda _ 1  (\Om) \,.$$ 
  
  Thus, letting 
  $$p _\k:=  \nabla w _\k (x_{i_0} ) \,,$$ 
  we obtain the upper bound inequality 
  $$| p_\k| ^ 2 \leq  \lambda _ 1  (\Om)  \frac{   \k ^ 2   e ^ {  2 w _\k  ^2 (x_{i_0} ) } -  1 }{  4 w _\k ^2  (x_{i_0} ) }  = \frac {\lambda _ 1 (\Om)} {2} \Psi _\k (w _\k ( x_ {i_0}) )<  \frac {\lambda _ 1 (\Om)} {2} \Psi _ \k (   \overline  w _\k ) =  \frac{\pi ^ 2}{2 D _\Om ^ 2 }  \,,$$ 
  which contradicts Proposition \ref{p:key}.  \qed

\def\cprime{$'$}

\end{document}